\title{Waldhausen's Theorem}
\author{Saul Schleimer}
\address{Department of Mathematics\\
        University of Warwick\\\newline
        Coventry, CV4 7AL\\UK}
\email{s.schleimer@warwick.ac.uk}
\renewenvironment{quote}{\par\leftskip25pt\rightskip25pt\it}
{\par\leftskip0pt\rightskip0pt}
\newcommand{\calK}{\mathcal{K}}
\newcommand{\calT}{\mathcal{T}}
\newcommand{\CC}{\mathbb{C}}
\newcommand{\NN}{\mathbb{N}}
\newcommand{\RR}{\mathbb{R}}
\renewcommand{\setminus}{{\smallsetminus}}
\newcommand{\st}{\mathbin{\mid}}
\newcommand{\from}{\co}
\newcommand{\cross}{{\times}}
\newcommand{\bdy}{\partial} 
\newcommand{\genus}{\operatorname{genus}}
\newcommand{\interior}{{\operatorname{interior}}}
\newcommand{\connect}{\#} 
\newcommand{\RRPP}{\mathbb{RP}} 
\newcommand{\refsec}[1]{\fullref{Sec:#1}}
\newcommand{\refthm}[1]{\fullref{Thm:#1}}
\newcommand{\reflem}[1]{\fullref{Lem:#1}}
\newcommand{\refrem}[1]{\fullref{Rem:#1}}
\newcommand{\refex}[1]{\fullref{Ex:#1}}
\newcommand{\reffig}[1]{\fullref{Fig:#1}}
\renewcommand{\equiv}{\approx}
\def\cnewtheorem#1[#2]#3{\newtheorem{#1}{#3}[section]
\expandafter\let\csname c@#1\endcsname\c@theorem}
\theoremstyle{plain}
\newtheorem{theorem}{Theorem}[section]
\newtheorem*{WTheorem}{\fullref{Thm:Waldhausen}}
\theoremstyle{definition}
\newtheorem*{remark*}{Remark}
\newtheorem*{claim*}{Claim}
\newtheorem*{question*}{Question}
\newtheorem*{answer*}{Answer}
\newtheorem*{application*}{Application}
\begin{document}

\begin{abstract}
This note is an exposition of Waldhausen's proof of Waldhausen's
Theorem: the three-sphere has a single Heegaard splitting, up to
isotopy, in every genus.  As a necessary step we also give a sketch of
the Reidemeister--Singer Theorem.
\end{abstract}

\maketitle

\section{Introduction}
\label{Sec:Intro}

Waldhausen's Theorem~\cite{Waldhausen68} tells us that Heegaard
splittings of the three-sphere are unique up to isotopy.  This is an
important tool in low-dimensional topology and there are several
modern proofs (Jaco and Rubinstein \cite{JacoRubinstein06}, Rieck
\cite{Rieck06} Rubinstein and Scharlemann
\cite{RubinsteinScharlemann96} and Scharlemann and Thompson
\cite{ScharlemannThompson94a}).
Additionally, at least two survey articles on Heegaard splittings
(Johnson \cite{Johnson07} and Scharlemann \cite{Scharlemann02}) include
proofs of Waldhausen's Theorem.

This note is intended as an exposition of Waldhausen's original proof,
as his techniques are still of interest.  See, for example, Bartolini
and Rubinstein's~\cite{BartoliniRubinstein06} classification of
one-sided splittings of $\RRPP^3$.

In \refsec{Foundations} we recall foundational material, set out the
necessary definitions and give a precise statement of Waldhausen's
Theorem.  \refsec{Stable} is devoted to {\em stable equivalence} of
splittings and a proof of the Reidemeister--Singer Theorem.  In
\refsec{Meridian} we discuss Waldhausen's {\em good} and {\em great}
systems of meridian disks.  \refsec{Proof} gives the proof of
Waldhausen's Theorem.  Finally, \refsec{Remarks} is a brief account of
the work-to-date on the questions raised by Waldhausen in Section~4 of
his paper.  

\subsection*{Acknowledgements} 

The material in this paper's Sections~\ref{Sec:Meridian}
and~\ref{Sec:Proof} do not pretend to any originality: often I simply
translate Waldhausen's paper directly.  I also closely follow, in
places word for word, a copy of handwritten and typed notes from the a
seminar on Waldhausen's proof, held at the University of Wisconsin at
Madison, Fall 1967.  The notes were written by Ric Ancel, Russ
McMillan and Jonathan Simon.  The speakers at the seminar included
Ancel and Simon.  

I thank Professors Ancel and Simon for permitting me to use these
seminar notes.  I thank Cameron Gordon and Yoav Moriah for their
encouraging words and patience during the writing of this paper.
Also, I am grateful to the referee for many helpful comments.

This work is in the public domain.

\section{Foundations}
\label{Sec:Foundations}

The books by Hempel~\cite{Hempel76} and Rolfsen~\cite{Rolfsen76} and
also Hatcher's notes \cite{Hatcher01} are excellent references on
three-manifolds.  Moise's book \cite{Moise77} additionally covers
foundational issues in PL topology, as does the book by Rourke and
Sanderson~\cite{RourkeSanderson72}.

We will use $M$ to represent a connected compact orientable
three-manifold.  We say $M$ is {\em closed} if the boundary $\bdy M$
is empty.  A {\em triangulation} of $M$ is a simplicial complex
$\calK$ so that the underlying space $||\calK||$ is homeomorphic to
$M$.  When no confusion can arise, we will regard the cells of
$||\calK||$ as being subsets of $M$.

\begin{example}
\label{Exa:ThreeSphere}
The three-sphere is given by
$$
S^3 = \{ (z, w) \in \CC^2 \st |z|^2 + |w|^2 = 2 \}.
$$ 
The boundary of the four-simplex gives a five-tetrahedron
triangulation of $S^3$.
\end{example}

\noindent
Requiring that $M$ be given with a triangulation is not a restriction:

\begin{theorem}[Triangulation]
\label{Thm:Triangulation}
Every compact three-manifold $M$ admits a trian\-gulation.\qed
\end{theorem}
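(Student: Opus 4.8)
The plan is to follow the classical approach of Moise, treating the result not as a combinatorial construction one carries out by hand but as a statement about taming topological structure. First I would use compactness of $M$ to extract a finite cover by coordinate charts, each a topological embedding of an open subset of $\RR^3$. The problem then becomes local, in the sense that the remaining task is to manufacture simplicial structures on the charts that are compatible on their overlaps. The genuine content, however, is not this bookkeeping but the analytic input that lets one replace wild topological data by polyhedral data.

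The heart of the argument is an approximation (``taming'') theorem: every topological embedding of a $2$--sphere or disk into a chart can be approximated, arbitrarily closely, by a \emph{tame} embedding, polyhedral with respect to the standard affine structure on $\RR^3$. This rests on Bing's side approximation theorem, and it is the dimension-three phenomenon on which everything turns. Granting it, I would invoke the polyhedral Schoenflies theorem of Alexander---a PL $2$--sphere in $S^3$ bounds a PL ball on each side---to control how the tamed spheres and disks sit inside the charts.

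With taming in hand, the global triangulation is assembled by induction. I would cut $M$ along a finite family of tamely embedded surfaces so that the complementary pieces are balls; each ball receives a standard triangulation, and the approximation theorem is applied once more, together with a small isotopy, to make the triangulations on the two sides of each cutting surface agree. Patching the pieces and applying the usual subdivision techniques of PL topology \cite{RourkeSanderson72} then yields an honest simplicial complex $\calK$ with $||\calK|| \homeo M$.

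The main obstacle is unambiguously the taming step, where all the difficulty of the theorem is concentrated. It is genuinely special to dimension three: the naive hope that a topological embedding of a surface lies close to a polyhedral one can fail badly---wild spheres such as the Alexander horned sphere show that tameness cannot be taken for granted---and the corresponding triangulation statement is in fact false in dimension four. I would therefore expect to import this step wholesale from Moise \cite{Moise77} rather than reprove it, spending my own effort only on the comparatively routine gluing and on checking that the assembled object is simplicial rather than merely a cell or handle decomposition.
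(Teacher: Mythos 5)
The paper offers no proof of this theorem at all: it is stated without proof as a foundational result, with the reader referred to Moise~\cite{Moise52} and Bing~\cite{Bing59}, and your sketch---reduce to finitely many charts by compactness, tame topological spheres and disks by approximation, control them with Alexander's PL Schoenflies theorem, and patch triangulated pieces together---is a faithful high-level outline of exactly those cited arguments, so in effect you and the paper take the same route of deferring the hard taming step to the literature. One minor historical caveat: Bing's side approximation theorem postdates both cited proofs, which rest on earlier approximation results, though the exposition in Moise's book~\cite{Moise77} (which the paper also references) is organized around approximation of surfaces essentially as you describe.
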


Furthermore, in dimension three there is only one PL structure:

\begin{theorem}[Hauptvermutung]
\label{Thm:Hauptvermutung}
Any two triangulations of $M$ are related by a PL homeomorphism that
is isotopic to the identity in $M$.  \qed
\end{theorem}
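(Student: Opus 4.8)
The plan is to reduce the statement to a single approximation theorem and then dispatch the isotopy-to-the-identity by a soft argument. First I would fix the two triangulations $\calK_1$ and $\calK_2$, together with the homeomorphisms $\phi_i \from ||\calK_i|| \to M$ that present them. Pushing both PL structures forward to $M$ along the $\phi_i$, the identity becomes a \emph{topological} homeomorphism $\Id \from (M, \calK_1) \to (M, \calK_2)$. All the content of the theorem is therefore concentrated in the assertion that this one homeomorphism can be isotoped to a PL map; triangulability (\refthm{Triangulation}) is what guarantees both PL structures are genuinely present in the first place, so the statement is not vacuous.

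The main step is to invoke Moise's approximation theorem \cite{Moise77}: any homeomorphism between PL three-manifolds can be $C^0$-approximated, arbitrarily closely, by a PL homeomorphism. Applying this to $\Id \from (M,\calK_1) \to (M,\calK_2)$ produces, for each $\epsilon > 0$, a PL homeomorphism $h \from (M, \calK_1) \to (M, \calK_2)$ with $d(h, \Id) < \epsilon$ in some fixed background metric on $M$. Equivalently, one may package this same input as the \emph{combinatorial} Hauptvermutung, namely that $\calK_1$ and $\calK_2$ admit a common subdivision, from which the PL homeomorphism $h$ is read off directly.

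It then remains to isotope $h$ to the identity. Here I would appeal to the standard fact that a PL homeomorphism of a compact PL manifold which is $C^0$-close to the identity is PL isotopic to it: taking $\epsilon$ smaller than the Lebesgue number of a cover of $M$ by PL coordinate balls forces $h$ to move each point within a single chart, and a chartwise straight-line isotopy, suitably patched, connects $h$ to $\Id$. Thus $h$ is a PL homeomorphism relating $\calK_1$ and $\calK_2$ that is isotopic to the identity, which is precisely the assertion.

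The hard part is entirely the approximation theorem; everything else is formal. Its proof is the substance of Moise's (and Bing's) work and is genuinely three-dimensional, proceeding by taming arguments---side approximation and the control of wild embeddings---that have no topological analogue in higher dimensions, where the Hauptvermutung famously fails. Once approximation and \refthm{Triangulation} are granted, both the initial reduction and the final isotopy are soft.
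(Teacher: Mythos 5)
The paper does not actually prove this statement: it is quoted as a foundational theorem, with the \qed attached to citations of Moise \cite{Moise52}, Bing \cite{Bing59}, and, for this isotopy version specifically, Hamilton \cite{Hamilton76}. So the comparison is against the standard literature route, and your first reduction matches it: push both PL structures to $M$, apply Moise's approximation theorem to the identity to get a PL homeomorphism $h$ with $d(h,\Id)<\epsilon$, then show that a homeomorphism sufficiently close to the identity is isotopic to it. (One caveat even here: re-packaging the input as the combinatorial Hauptvermutung does not serve your purpose, since the homeomorphism read off from a simplicial isomorphism of common subdivisions is a priori an arbitrary self-homeomorphism of $M$, with no reason to be close to the identity; you need the metric $\epsilon$-approximation form of Moise's theorem specifically.)

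The genuine gap is in your last step, which you dismiss as soft. The claim that a $C^0$-close homeomorphism is isotopic to the identity by ``chartwise straight-line isotopy, suitably patched'' fails twice over. First, the straight-line interpolation $(1-t)x + t\,h(x)$ between the identity and a merely $C^0$-small homeomorphism need not be injective at intermediate times: $h$ can rotate a tiny disk inside a chart through angle $\pi$ while remaining $\epsilon$-close to the identity, and then two nearby points collide at $t=1/2$; closeness in $C^1$, not $C^0$, is what makes linear interpolation an isotopy. Second, isotopies do not patch the way maps do: after straightening $h$ over one chart you have altered it on the overlaps, and controlling that propagation is precisely the content of local contractibility of the homeomorphism group (Chernavskii, Edwards--Kirby), a hard theorem whose modern proof goes through the torus trick --- which is in fact how Hamilton \cite{Hamilton76}, the source the paper cites for exactly this isotopy-enhanced version, obtains the three-dimensional statement. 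In dimension three one can alternatively establish ``close implies PL isotopic'' by induction over a handle or simplex decomposition using regular-neighborhood control and the Alexander trick, but that is a substantial argument, not a formality. So your outline correctly identifies the approximation theorem as one hard core of the result, but the isotopy-to-the-identity step is a second hard core, and your proposed argument for it would not go through as written.
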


These theorems are due to Moise~\cite{Moise52}.  An alternative proof
is given by Bing~\cite{Bing59}.  Our version of the Hauptvermutung may
be found in Hamilton~\cite{Hamilton76}.


We now return to notational issues.  We will use $F$ to represent a
closed connected orientable surface embedded in $M$.  A simple closed
curve $\alpha \subset F$ is {\em essential} if $\alpha$ does not bound
a disk in $F$.

For any $X \subset M$ we use $U(X)$ to denote a regular open
neighborhood of $X$, taken in $M$.  This neighborhood is assumed to be
small with respect to everything relevant.  If $X$ is a topological
space, we use $|X|$ to denote the number of components of $X$.

A {\em handlebody}, usually denoted by $V$ or $W$, is a homeomorph of
a closed regular neighborhood of a finite, connected graph embedded in
$\RR^3$.  The {\em genus} of $V$ agrees with the genus of $\bdy V$.
Notice that if $\calK$ is a triangulation of $M$ then a closed regular
neighborhood of the one-skeleton of $||\calK||$ is a handlebody
embedded in $M$.

A disk $v_0$ is {\em properly embedded} in a handlebody $V$ if $v_0
\cap \bdy V = \bdy v_0$; this definition generalizes naturally to
surfaces and arcs contained in bounded three-manifolds and also to
arcs contained in bounded surfaces.

A {\em Heegaard splitting} is a pair $(M, F)$ where $M$ is a closed
oriented three-manifold, $F$ is an oriented closed surface embedded in
$M$, and $M \setminus U(F)$ is a disjoint union of handlebodies.

\begin{example}
\label{Exa:GenusZero}
There is an equatorial two-sphere $S^2 \subset S^3$:
$$
S^2 = \{ (z, w) \in S^3 \st {\rm Im}(w) = 0 \}.
$$ 
Note that $S^2$ bounds a three-ball on each side.  We call $(S^3,
S^2)$ the {\em standard} splitting of genus zero.
\end{example}

The Alexander trick proves that any three-manifold with a splitting of
genus zero is homeomorphic to the three-sphere.
Furthermore, we have:

\begin{theorem}[Alexander~\cite{Alexander24}]
\label{Thm:Alexander}
Every PL two-sphere in $S^3$ bounds three-balls on both sides.  \qed
\end{theorem}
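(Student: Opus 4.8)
The plan is to prove this Schoenflies-type statement by a height-function argument, inducting on the number of saddle critical points. Writing $\Sigma$ for the given PL two-sphere, I would first remove a point of $S^3 \setminus \Sigma$ so as to work in $\RR^3$ with the standard height function $h \from \RR^3 \to \RR$, $h(x,y,z) = z$. After a small PL isotopy I may assume that $\Sigma$ is in general position with respect to $h$, so that $h|_\Sigma$ is a PL Morse function with distinct critical values; its critical points are then maxima, minima and saddles. Since $\euler(\Sigma) = 2$, the numbers $m$, $s$, $n$ of maxima, saddles and minima satisfy $m - s + n = 2$, so saddles occur precisely when there are more than two critical points. The induction is on $s$.

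For the base case, suppose $h|_\Sigma$ has no saddle. Then $m = n = 1$, every regular level meets $\Sigma$ in a single circle, and $\Sigma$ is readily seen to be ambient isotopic to a round sphere. A round two-sphere bounds a ball on each side in $S^3$, which is the desired conclusion.

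For the inductive step, assume a saddle occurs and let $c_0$ be the lowest critical value at which a saddle sits. Because $\Sigma$ is connected and $c_0$ is the lowest saddle, below $c_0$ all critical points are minima; hence, fixing a regular value $c = c_0 - \epsilon$, each circle of the slice $\Sigma \cap \{h = c\}$ bounds a disk in the part of $\Sigma$ below the plane $\{h = c\}$ that contains a single minimum (a ``cap''). Choosing a circle $\gamma$ of the slice that is innermost in the plane, I obtain a disk $D \subset \{h = c\}$ with $D \cap \Sigma = \gamma$ and interior disjoint from $\Sigma$; together with its cap $E \subset \Sigma$ it cobounds a ball $B = D \cup E$ meeting $\Sigma$ only along $E$. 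An ambient isotopy supported near $B$ pushes $E$ across to $D$ and slightly past the plane, cancelling the saddle at $c_0$ against the minimum of $E$ and so strictly reducing $s$. By induction the resulting sphere bounds a ball on each side, and since ambient isotopy carries balls to balls, so does $\Sigma$.

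The main obstacle is the honest verification of this cancelling isotopy: one must check, staying in the PL category, that pushing the innermost cap up through the level plane removes exactly one saddle together with the paired minimum and introduces no new critical points, so that the complexity genuinely drops. This demands care with general position for the level slices and with the band describing the saddle, and it is where Alexander's original argument does its real work. The foundational PL results quoted above, \refthm{Triangulation} and \refthm{Hauptvermutung}, guarantee that the general-position adjustments and isotopies used throughout are available.
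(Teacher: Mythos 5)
The paper does not actually prove this statement: it is quoted as a foundational theorem, with the reader sent to Hatcher's notes \cite{Hatcher01} for a detailed proof. So your proposal should be measured against the classical Alexander--Hatcher argument. You have the right setting (height function, general position, induction on critical points), but your inductive step contains a genuine gap, and it is more than the ``honest verification'' you flag at the end. First, the innermost circle $\gamma$ of the slice at $c = c_0 - \epsilon$ need not be one of the circles involved in the saddle at $c_0$: the saddle may merge two entirely different circles of the slice, in which case pushing $\gamma$'s cap does nothing to the saddle. Second, even when $\gamma$ is adjacent to the saddle, the move you describe --- pushing the cap $E$ across $B$ to $D$ and ``slightly past the plane'' --- cancels nothing: afterwards the surface near $\gamma$ is a shallow cap just above the plane, which still carries one minimum, so the triple $(m,s,n)$ is unchanged and $s$ does not drop. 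To genuinely cancel the minimum against the saddle you must push the cap up past the level $c_0$, and that sweep can be obstructed by other sheets of $\Sigma$ overhanging $D$: innermostness at the level $c$ controls only the slice, not the region above it. Worse, the circle that \emph{is} adjacent to the saddle need not be innermost (other caps can be nested inside it in the plane), so neither choice of circle makes the isotopy available in general. Your induction therefore does not get off the ground.

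This is precisely why Alexander's proof --- and Hatcher's exposition of it, which this paper cites --- is a \emph{surgery} argument rather than an isotopy argument. One chooses a generic level having critical points both above and below, takes an innermost circle $\gamma$ of the slice with disk $D \subset \{h = c\}$ satisfying $D \cap \Sigma = \gamma$, and performs disk surgery on $\Sigma$ along $D$, producing two PL spheres each with strictly fewer critical points; the inductive hypothesis applies to each, and the complementary balls are then reassembled along $D$. The real work sits in the reassembly and in checking the critical-point count drops for both pieces --- not in a cancelling isotopy, which you have misattributed to Alexander. If you insist on an isotopy-based scheme, you must first manufacture a cap that is simultaneously innermost and adjacent to the saddle, or clear the obstructing sheets by prior moves, and doing so carefully amounts to redoing the surgery argument. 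Your base case and your claim that each circle of the slice below the lowest saddle bounds a single-minimum cap are both fine; it is only the reduction step that fails as stated.
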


\noindent
See Hatcher~\cite{Hatcher01} for a detailed proof.  It follows that every PL
two-sphere gives a Heegaard splitting of $S^3$.

\begin{example}
\label{Exa:GenusOne}
There is a torus $T \subset S^3$:
$$
T = \{ (z, w) \in S^3 \st |z| = |w| = 1 \}.
$$ 
It is an exercise to check that $T$ bounds a solid torus ($D^2 \cross
S^1$) on each side.  We call $(S^3, T)$ the standard splitting of
$S^3$ of genus one.  
\end{example}


The three-manifolds admitting splittings of genus one are $S^3$, $S^2
\cross S^1$ and the lens spaces.  As an easy exercise from the
definitions we have:

\begin{lemma}
\label{Lem:Neighborhood}
Suppose $\calK$ is a triangulation of a closed orientable manifold
$M$.  Suppose that $F$ is the boundary of a closed regular
neighborhood of the one-skeleton of $||\calK||$.  Then $(M, F)$ is a
Heegaard splitting.  \qed
\end{lemma}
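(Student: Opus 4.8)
The plan is to realize $M \setminus U(F)$ as the disjoint union of exactly two handlebodies, one on each side of $F$. Let $V$ denote the closed regular neighborhood of the one-skeleton $\calK^{(1)}$, so that $F = \bdy V$. The text has already observed that $V$ is a handlebody; hence the entire content of the lemma is to identify the complementary region $W = \closure{M \setminus V}$ and to prove that it too is a handlebody.

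The key idea is to pass to the dual cell decomposition $\calK^*$. Its one-skeleton $(\calK^*)^{(1)}$ is the graph carrying one vertex at the barycenter of each tetrahedron of $\calK$ and one edge dual to each two-simplex, the latter running through the interior of that face to join the barycenters of the two tetrahedra sharing it. This graph is disjoint from $\calK^{(1)}$: its vertices lie in the interiors of three-cells, and its edges meet the two-skeleton only in face interiors. First I would verify, working tetrahedron-by-tetrahedron, that $W$ deformation retracts onto $(\calK^*)^{(1)}$: inside a tetrahedron $\sigma$ the piece $W \cap \sigma$ is the central ball obtained by deleting a neighborhood of the edges of $\sigma$, and across each face these central balls are joined by collars of the corresponding central disks. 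By regular neighborhood theory in the PL category, this exhibits $W$ as a closed regular neighborhood of $(\calK^*)^{(1)}$.

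Since $M$ is connected, the dual graph $(\calK^*)^{(1)}$ is connected, so its closed regular neighborhood $W$ is a handlebody. Therefore $M \setminus U(F) = V \disjoint W$ is a disjoint union of two handlebodies. Finally, $F = \bdy V$ is a closed surface; it is orientable because it separates and is two-sided in the orientable manifold $M$. Choosing an orientation on $F$ then completes the verification that $(M, F)$ meets the definition of a Heegaard splitting.

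I expect the main obstacle to be this middle step: making rigorous the claim that the complement of a regular neighborhood of the one-skeleton is itself a regular neighborhood of the dual one-skeleton. The cleanest route is to build a handle structure on $W$ directly from the cells of $\calK$---a zero-handle in each tetrahedron and a one-handle across each face---so that $W$ manifestly carries a handle decomposition with no handles of index two or three. Some care is needed to confirm that the edges and vertices of $\calK$ force no higher-index handles; this becomes clear once one notes that these simplices lie entirely inside $V$ and so contribute nothing to the handle structure of the complement.
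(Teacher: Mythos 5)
Your proof is correct and is exactly the standard argument the paper has in mind: the paper states the lemma as an ``easy exercise'' with a \qed and a pointer to Rolfsen (page 241), where the complement of the regular neighborhood of $\calK^{(1)}$ is identified, just as you do, with a regular neighborhood of the dual one-skeleton, hence a handlebody. Your closing remark rightly flags the one technical point (deformation retraction alone is not enough --- one wants a collapse, or equivalently your handle decomposition with only $0$-- and $1$--handles), and with that, together with orientability of $M$ guaranteeing that regular neighborhoods of connected graphs are handlebodies, the argument is complete.
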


See, for example, Rolfsen \cite[page 241]{Rolfsen76}.  The splitting $(M,
F)$ so given is the splitting {\em associated} to the triangulation
$\calK$.  As an immediate consequence of the Triangulation Theorem
(\ref{Thm:Triangulation}) and \reflem{Neighborhood} we find that every
closed three-manifold has infinitely many Heegaard splittings.  To
control this extravagance of examples we make:

\begin{definition}
\label{Def:Equiv}
A pair of Heegaard splittings $(M, F)$ and $(M, F')$ are {\em
equivalent}, written $(M, F) \equiv (M, F')$, if there is a
homeomorphism $h \from M \to M$ such that
\begin{itemize}
\item
$h$ is isotopic to the identity and
\item
$h|F$ is an orientation preserving homeomorphism from $F$ to $F'$. 
\end{itemize}
\end{definition}

It is an important visualization exercise to show that $(S^3, T)$ is
equivalent to $\left( S^3, -T \right)$.  Here $-T$ is the torus $T$
equipped with the opposite orientation.  We now have another
foundational theorem:

\begin{theorem}[Gugenheim~\cite{Gugenheim53}]
\label{Thm:IsotopyOfBalls}
If $B$ and $B'$ are PL three-balls in a three-manifold $M$ then there
is an isotopy of $M$ carrying $B$ to $B'$.  \qed
\end{theorem}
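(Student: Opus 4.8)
The plan is to reduce the whole statement to the Regular Neighborhood Theorem of PL topology, in the form available in Rourke and Sanderson~\cite{RourkeSanderson72}. Throughout I would assume the balls lie in $\interior M$, which is the only case needed (and is automatic when $M$ is closed). The precise input I would quote is threefold: a compact polyhedron $K \subset \interior M$ admits a regular neighborhood; a compact PL submanifold $N$ with $K \subset \interior N$ and $N \subset \interior M$ that collapses to $K$ is a regular neighborhood of $K$; and any two regular neighborhoods of $K$ are carried one onto the other by an ambient isotopy of $M$. Granting this, the strategy is to show that \emph{any} PL three-ball in $\interior M$ is ambient isotopic to an arbitrarily small standard ball about a prescribed interior point, and then to slide small balls around using the connectedness of $M$.

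First I would fix an interior point $p \in \interior B$ and argue that $B$ itself is a regular neighborhood of $p$. Since $B$ is a PL three-ball it is PL homeomorphic to a three-simplex, and triangulating $B$ as the cone on a triangulation of $\bdy B$ with cone point $p$ exhibits a PL collapse of $B$ to $\{p\}$. Thus $B$ is a compact PL manifold neighborhood of $p$ collapsing to $p$, so by the characterization above it is a regular neighborhood of $p$ in $M$. A small simplicial ball $B_0$ about $p$ — say the closed star of $p$ in a fine triangulation of $M$ in which $p$ is a vertex, whose link is a two-sphere — is visibly another regular neighborhood of $p$. The uniqueness clause then yields an ambient isotopy of $M$ carrying $B$ to $B_0$, and I can take $B_0$ as small as I please. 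Applying the same argument to $B'$ and a chosen point $p' \in \interior B'$ gives an ambient isotopy carrying $B'$ to a small ball $B_0'$ about $p'$.

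It then remains to connect the two small balls, which is exactly the homogeneity of a connected PL manifold. Choosing a PL arc $\gamma$ from $p$ to $p'$ in $\interior M$, a regular neighborhood of $\gamma$ is again a three-ball, and inside this ball one can explicitly push $p$ to $p'$; this produces an ambient isotopy of $M$ taking a sufficiently small ball about $p$ onto one about $p'$. Concatenating the isotopy $B \rightsquigarrow B_0$, the sliding isotopy $B_0 \rightsquigarrow B_0'$, and the reverse of $B' \rightsquigarrow B_0'$ gives an ambient isotopy of $M$ carrying $B$ to $B'$.

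I expect the main obstacle to be the first step: certifying that an abstractly given PL ball is genuinely a regular neighborhood of one of its interior points, rather than merely a subset homeomorphic to a ball. This is where the real content of PL regular neighborhood theory enters, in particular the recognition principle that a manifold neighborhood collapsing onto $K$ is regular, which is what licenses the ambient-isotopy uniqueness. Once that identification is secured, the homogeneity and concatenation steps are entirely routine, and no genus, knotting, or Schoenflies-type hypothesis on the balls is required.
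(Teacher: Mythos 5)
Your proposal is correct, but note that the paper offers no proof of this statement at all: it simply cites Gugenheim \cite{Gugenheim53} and points the reader to Theorem~3.34 (the Disc Theorem) of Rourke and Sanderson \cite{RourkeSanderson72}, whose standard proof is precisely your argument --- recognize each ball in $\interior M$ as a regular neighborhood of an interior point via collapsibility of the cone structure, then combine ambient-isotopy uniqueness of regular neighborhoods with homogeneity along an arc. The only elision is at the very end: the sliding isotopy carries $B_0$ onto \emph{some} ball about $p'$, not necessarily $B_0'$, so one further application of regular-neighborhood uniqueness is needed to identify the image with $B_0'$; this is routine, and your restriction to balls in $\interior M$ (automatic for the paper's closed $M$) is exactly the right reading of the statement.
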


See Theorem 3.34 of Rourke and Sanderson~\cite{RourkeSanderson72} for
a discussion.  They also give as Theorem~4.20 a relative version. In
any case, it follows that all genus zero splittings of $S^3$ are
equivalent to the standard one, so justifying the name.

\begin{exercise}
\label{Ex:GenusOne}
Show that any genus one splitting of $S^3$ is isotopic to the standard
one. (Corollary~4.16 of~\cite{RourkeSanderson72} may be useful.)
\end{exercise}

Waldhausen's Theorem generalizes this result to every genus:

\begin{WTheorem}
If $F$ and $F'$ are Heegaard splittings of $S^3$ of the same genus
then $(S^3, F)$ is equivalent to $(S^3, F')$.
\end{WTheorem}

\begin{remark}
Waldhausen's original statement is even simpler: 
\begin{quote}
Wir zeigen, da\ss{} es nur die bekannten gibt.
\end{quote}
That is: ``We show that only the well-known [splittings of $S^3$]
exist.''
\end{remark}


\section{Stabilization and the Reidemeister--Singer Theorem}
\label{Sec:Stable}

A key step in Waldhausen's proof is the Reidemeister--Singer Theorem
(\fullref{Thm:RS}, below).  In this section we lay out the
necessary definitions and sketch a proof of the Reidemeister--Singer
Theorem.  Most approaches to Reidemeister--Singer, including ours, are
via piecewise linear topology.  Bonahon in an unpublished manuscript
has given a proof relying on Morse theory.

For further details and the history of the problem we refer the reader
to the original papers of Reidemeister and
Singer~\cite{Reidemeister33,Singer33} as well as the more modern
treatment by Craggs~\cite{Craggs76}.  A version of Craggs' proof is
also given by Fomenko \cite[Theorem~5.2]{FomenkoMatveev97}.  Note also that
Lei~\cite{Lei00}, in an amusing reversal, gives a very short proof of
the Reidemeister--Singer Theorem by assuming Waldhausen's Theorem.

We begin by stating the basic definitions and then the theorem.

\begin{definition}
\label{Def:UnknottedArc}
Suppose that $V$ is a handlebody.  A properly embedded arc $\alpha
\subset V$ is {\em unknotted} if there is an arc $\beta \subset \bdy
V$ and an embedded disk $B \subset V$ so that $\bdy \alpha = \bdy
\beta$ and $\bdy B = \alpha \cup \beta$.
\end{definition}

\begin{definition}
\label{Def:StabilizeOne}
Suppose that $(M, F)$ is a Heegaard splitting with handlebodies $V$
and $W$.  Let $\alpha$ be an unknotted arc in $V$.  Let $F' = \bdy (V
\setminus U(\alpha)) = (V \setminus U(\alpha)) \cap (W \cup
\overline{U(\alpha)})$.  Then the pair $(M, F')$ is a {\em
stabilization} of $F$ in $M$.  Also, the pair $(M, F)$ is a {\em
destabilization} of $(M, F')$.
\end{definition}

Observe that $(S^3, T)$ is isotopic to a stabilization of $(S^3,
S^2)$.  It is an exercise to prove, using the relative version of
\refthm{IsotopyOfBalls} and \refex{GenusOne},
that if $(M, F')$ and $(M, F'')$ are stabilizations of $(M, F)$, then
$(M, F') \equiv (M, F'')$.  On the other hand, as discussed below,
destabilization need {\em not} be a unique operation.


Recall that the {\em connect sum} $M \connect N$ is obtained by
removing the interior of a ball from each of $M$ and $N$ and then
identifying the resulting boundary components via an orientation
reversal.

\begin{definition}
\label{Def:StabilizeTwo}
Let $(M, F)$ be a Heegaard splitting.  Let $(S^3, T)$ be the standard
genus one splitting of $S^3$.  Pick embedded three-balls meeting $F
\subset M$ and $T \subset S^3$ in disks.  The {\em connect sum} of the
splittings is the connect sum of pairs: $(M, F) \connect (S^3, T) = (M
\connect S^3, F \connect T)$.
\end{definition}

Again, this operation is unique and the proof is similar to that of
uniqueness of stabilization.  This is not a surprise, as stabilization
and connect sum with $(S^3, T)$ produce equivalent splittings.  Thus
we do not distinguish between them notationally.

\begin{remark}
\label{Rem:Forest}
Fix a manifold $M$.  We may construct a graph $\Sigma(M)$ where
vertices are equivalence classes of splittings and edges correspond to
stabilizations.  From \refthm{Triangulation} it follows that
$\Sigma(M)$ is nonempty.  The uniqueness of stabilization implies that
$\Sigma(M)$ has no cycles and so is a forest.  Finally, $\Sigma(M)$ is
infinite because splittings of differing genera cannot be isotopic.
\end{remark}

Define $(M, F) \connect_n (S^3, T) = ((M, F) \connect_{n-1}(S^3, T))
\connect (S^3, T)$.

\begin{definition}
\label{Def:StabEquiv}
Two splittings, $(M, F)$ and $(M, F')$, are {\em stably equivalent} if
there are $m, n \in \NN$ so that $(M, F) \connect_m (S^3, T) \equiv
(M, F') \connect_n (S^3, T)$.
\end{definition}

We now may state:

\begin{theorem}[Reidemeister--Singer]
\label{Thm:RS}
Suppose that $M$ is a closed, connected, orientable three-manifold.
Then any two Heegaard splittings of $M$ are stably equivalent.
\end{theorem}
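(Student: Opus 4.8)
The plan is to pass through triangulations, using \reflem{Neighborhood} as the bridge between combinatorics and splittings. To each triangulation $\calK$ of $M$ I associate the splitting $H(\calK) = (M, F_\calK)$, whose two handlebodies are closed regular neighborhoods of the one-skeleton $\calK^{(1)}$ and of the dual one-skeleton $\hat\calK^{(1)}$; these neighborhoods are complementary and meet along $F_\calK$. Since a closed regular neighborhood $\Neigh(G)$ of a connected graph $G$ is a handlebody of genus $b_1(G)$, the genus of $H(\calK)$ equals $e - v + 1$, where $v$ and $e$ count the vertices and edges of $\calK$. The theorem will follow once I establish two things: (i) any two of these triangulation splittings are stably equivalent, and (ii) every splitting $(M, F)$ is stably equivalent to some $H(\calK)$.

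For (i) I would first invoke the piecewise linear theory standing behind \refthm{Hauptvermutung}: any two triangulations of $M$ admit a common stellar subdivision, and hence are joined by a finite sequence of elementary stellar moves and their inverses, each move starring a single simplex. The key lemma is then that starring one simplex changes the associated splitting only by stabilizations. I would verify this by cases on the dimension of the starred simplex: introducing the barycenter and coning enlarges $\Neigh(\calK^{(1)})$ by attaching one or more handles whose core arcs can be pushed into a small ball, so that each is unknotted in the sense of \refdef{UnknottedArc}; by \refdef{StabilizeOne} every such attachment is an elementary stabilization. Granting this, $H(\calK)$ and $H(\calK')$ are stably equivalent whenever $\calK'$ refines $\calK$, and (i) follows by comparing both triangulations with a common subdivision.

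For (ii), the bridge, I would start from the handlebodies $V, W$ of $(M, F)$ and a spine $\Gamma \subset V$, that is, a graph with $V \homeo \Neigh(\Gamma)$. I would build a triangulation $\calK$ of $M$ containing an isotopic copy of $\Gamma$ inside $\calK^{(1)}$, constructed by repeatedly coning from $\Gamma$ outward so that every edge of $\calK^{(1)}$ not already in $\Gamma$ is an unknotted arc relative to the complex built so far. Absorbing each such edge into the neighborhood of the one-skeleton enlarges the $V$-side handlebody by an unknotted handle, which (viewed from the $W$-side) is again an elementary stabilization; hence $H(\calK)$ is equivalent to a stabilization of $(M, F)$. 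The genus count above shows that fine triangulations force many stabilizations, which is exactly why the conclusion is only stable equivalence and not equivalence. Combining (i) and (ii), given splittings $(M, F)$ and $(M, F')$, each is stably equivalent to a triangulation splitting, and all triangulation splittings are mutually stably equivalent, so $(M, F)$ and $(M, F')$ are stably equivalent.

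I expect the genuine obstacle to be the handle bookkeeping shared by the key lemma of (i) and by step (ii): one must check that the arcs dug out of, or attached to, the one-skeleton neighborhood really are unknotted as in \refdef{UnknottedArc} — so that each is a single stabilization — and that the complementary region remains a union of handlebodies at every stage. Making the construction of $\calK$ in (ii) compatible with an arbitrary spine $\Gamma$, while keeping all newly introduced edges unknotted, is the delicate point; everything else is either cited piecewise linear machinery or a direct neighborhood computation.
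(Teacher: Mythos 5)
Your global strategy coincides with the paper's: stabilize an arbitrary splitting until it is associated to a triangulation, then relate any two triangulations by local combinatorial moves, each of which changes the associated splitting only by stabilizations. The paper implements the second half with bistellar flips (\refthm{Pachner}), recording that a $2/3$ flip is one stabilization and a $1/4$ flip is three; you propose stellar moves, which is closer to the classical Reidemeister--Singer route. But your step (i) contains a genuine gap: Alexander's theorem gives only that two PL homeomorphic complexes are joined by a finite zigzag of starrings \emph{and} inverse starrings (stellar equivalence); the existence of a \emph{common stellar subdivision}, which your final sentence of (i) invokes, is not a theorem you can quote --- it is a well-known open question (discussed in Lickorish~\cite{Lickorish99}, which the paper cites). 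So you cannot ``compare both triangulations with a common subdivision'' and read off stable equivalence directly. The repair is exactly the final step of the paper's proof: accept the zigzag, so that consecutive associated splittings are related by stabilization \emph{or destabilization}, and then invoke the uniqueness of stabilization (\refrem{Forest}, the forest structure of $\Sigma(M)$) to convert the zigzag into a common stabilization. Without that appeal to uniqueness your (i) does not close. (Common subdivisions that are not stellar do exist once \refthm{Hauptvermutung} has made the identity PL, but then your key lemma must handle arbitrary subdivisions rather than single starrings, which is substantially more work than the case analysis you sketch.)

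Your step (ii) is also incomplete at its only hard point, as you yourself concede. Placing a spine $\Gamma$ of $V$ inside $\calK^{(1)}$ is not enough: the splitting associated to $\calK$ is obtained from $(M,F)$ by attaching a one-handle along each edge of $\calK^{(1)}$ outside $\Gamma$ and a spanning tree, and each attachment is a stabilization only if the corresponding arc is unknotted in the complementary handlebody in the sense of \refdef{UnknottedArc}. Arcs in a handlebody can be knotted, and ``can be pushed into a small ball'' does not certify unknottedness either --- a local knot also lives in a small ball; what is required is the boundary-parallel disk. The paper resolves this concretely in \reflem{ExistsATriangulation}: choose complete disk systems in $V$ and $W$, isotope so that the Heegaard diagram cuts $F$ into disks, form the dual two-vertex pseudo-triangulation $\calT$, verify explicitly that $F$, after $g$ stabilizations and then further ones, becomes the splitting associated to $\calT$, and finally pass to the second barycentric subdivision to land on an honest triangulation, at the cost of further stabilizations. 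Either import that lemma, or supply an inductive construction in which every newly coned edge comes equipped with its parallelism disk; as written, ``repeatedly coning from $\Gamma$ outward'' names the difficulty rather than resolving it.
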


\begin{remark}
\label{Rem:Tree}
The theorem may be restated as follows: $\Sigma(M)$ is connected.
Since \refrem{Forest} shows that $\Sigma(M)$ is a forest, it is a
tree.
\end{remark}

We say that $(M, F)$ is {\em unstabilized} if it is not equivalent to
a stabilized splitting.  Waldhausen calls such splittings ``minimal''.
However modern authors reserve ``minimal'' to mean {\em minimal
genus}.  This is because there are manifolds containing unstabilized
splittings that are not of minimal genus.  For examples, see
Sedgwick's discussion of splittings of Seifert fibered
spaces~\cite{Sedgwick99}.  Note that unstabilized splittings
correspond to leaves of the tree $\Sigma(M)$.

Finally, there are fixed manifolds that contain unstabilized
splittings of arbitrarily large genus.  The first such examples are
due to Casson and Gordon~\cite{CassonGordon85}.  The papers of Kobayashi
\cite{Kobayashi92}, Lustig and Moriah \cite{LustigMoriah00} and Moriah
\cite{MoriahEtAl06} contain generalizations.

We now set out the tools necessary for our proof of \refthm{RS}.  A
{\em pseudo-triangulation} $\calT = \{ \Delta_i \}$ of a
three-manifold $M$ is a collection of tetrahedra together with face
identifications.  We require that the resulting quotient space
$||\calT||$ be homeomorphic to $M$ and that every open cell of $\calT$
embeds.  We do not require that $\calT$ be a simplicial complex.  It
is a pleasant exercise to find all pseudo-triangulations of $S^3$
consisting of a single tetrahedron.

As with triangulations, if $\calT$ is a pseudo-triangulation of $M$
then the boundary of a closed regular neighborhood of the one-skeleton
of $||\calT||$ is a Heegaard splitting of $M$.  Notice that the second
barycentric subdivision of $\calT$ is a triangulation of $M$.

\begin{lemma}
\label{Lem:ExistsATriangulation}
For any splitting $(M, F)$ there is an $n \in \NN$ and a triangulation
$\calK$ of $M$ so that $(M, F) \connect_n (S^3, T)$ is associated to
$\calK$.  
\end{lemma}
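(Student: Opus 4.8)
The plan is to produce a triangulation $\calK$ whose associated splitting is an $n$--fold stabilization of $(M, F)$; by uniqueness of stabilization this gives $(M, F) \connect_n (S^3, T) \equiv (M, F_\calK)$, which is exactly the assertion. I would reach $\calK$ in two stages, first building a pseudo-triangulation and then passing to its second barycentric subdivision, which is a genuine triangulation. The bridge between the two stages is a sublemma I would establish first: \emph{passing to a subdivision only stabilizes the associated splitting.} Indeed, an edge introduced by subdivision lies inside a single cell of the coarser complex, hence runs along an arc contained in a small ball; thickening such an edge attaches a handle along an unknotted arc, which is a stabilization in the sense of \refdef{StabilizeOne}. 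Thus it suffices to find any pseudo-triangulation $\calT$ whose splitting is already a stabilization of $(M, F)$.

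To build $\calT$, write $V$ and $W$ for the two handlebodies of the splitting $(M, F)$, and fix a spine $G \subset \interior V$, a finite connected graph with $V = U(G)$ and $\genus(F) = \rank H_1(G)$. I would extend $G$ to a pseudo-triangulation $\calT$ of $M$ arranged so that $G$ is a spine of the one-skeleton neighborhood $U(\calT^{(1)})$; concretely, one triangulates the complementary handlebody $W$ together with a collar $F \cross I$, then cones and amalgamates so that $U(\calT^{(1)})$ is $V$ together with finitely many $1$--handles whose cores $e_1, \dots, e_k$ are edges of $\calT$ lying in $W$. With such a $\calT$ the associated splitting is obtained from $(M, F)$ by attaching these $k$ handles in turn.

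The crux is to ensure that each attachment is genuinely a stabilization, \ie that the arcs $e_i$ can be made \emph{unknotted} in the relevant complementary handlebody. A priori an edge of a pseudo-triangulation may be knotted, so one cannot simply add the $e_i$ in an arbitrary order. I would handle this by subdividing $\calT$ until every $e_i$ is contained in a ball meeting the current complement in a standard disk, and by ordering the attachments to follow a collapse of the dual one-skeleton onto a spine of $W$; this keeps the complement a handlebody at each stage and exhibits each $e_i$ as boundary--parallel, hence unknotted. Granting this, each of the $k$ steps is a stabilization, so the splitting of $\calT$ is a $k$--fold stabilization of $(M, F)$.

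Finally I pass from $\calT$ to the triangulation $\calK = \calT''$, its second barycentric subdivision (a triangulation, as noted above). By the subdivision sublemma this only stabilizes further, say $m$ additional times, so the splitting $(M, F_\calK)$ associated to $\calK$ is an $n$--fold stabilization of $(M, F)$ with $n = k + m = \genus(F_\calK) - \genus(F)$. Invoking uniqueness of stabilization once more identifies $(M, F_\calK)$ with $(M, F) \connect_n (S^3, T)$, completing the argument. I expect the main obstacle to be the unknottedness step of the third paragraph: controlling the global knot type of the added edges, which is exactly where subdivision and the choice of attaching order must do the real work.
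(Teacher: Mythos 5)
Your global architecture agrees with the paper's: build a pseudo-triangulation $\calT$ whose associated splitting is a stabilization of $(M, F)$, then pass to the second barycentric subdivision, which is a genuine triangulation (the paper, too, ends with the exercise that this last step costs only further stabilizations). The divergence, and the problem, is in how $\calT$ is produced. Your construction --- fix a spine of $V$, triangulate $W$ and a collar, ``cone and amalgamate'' so that $U(\calT^{(1)})$ is $V$ plus $1$--handles with cores $e_1, \ldots, e_k \subset W$ --- leaves the heart of the lemma unproved, and you say so yourself: nothing establishes that the $e_i$ can be attached as a sequence of stabilizations. Neither of your two devices closes this gap. First, containment of $e_i$ in a ball meeting the current complement in a disk does not make $e_i$ unknotted in the sense of \refdef{UnknottedArc}: an arc can be locally knotted inside such a ball, and nothing in the construction rules this out; moreover, subdividing $\calT$ replaces each $e_i$ by a chain of new edges and introduces many further edges, so the proposed fix changes the very family of arcs it is meant to tame. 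Second, the claim that ordering the attachments along a collapse ``keeps the complement a handlebody at each stage and exhibits each $e_i$ as boundary-parallel'' is precisely the statement to be proved; as written it is circular, since the complement remains a handlebody at each stage exactly when each attachment is a stabilization.

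The paper's proof contains the missing idea, and it is worth absorbing: unknottedness is engineered into $\calT$ rather than verified afterwards. Choose complete disk systems $\{v_i\} \subset V$ and $\{w_j\} \subset W$ cutting each handlebody into a single ball, isotope so that the Heegaard diagram $\Gamma = F \cap \left( (\cup v_i) \cup (\cup w_j) \right)$ cuts $F$ into disks, and take $\calT$ to be the dual of the two-complex $F \cup (\cup v_i) \cup (\cup w_j)$. Since this two-complex cuts $M$ into two balls, $\calT$ is a two-vertex pseudo-triangulation in which every edge is of one of two controlled kinds: edges dual to the disks $v_i$ or $w_j$, which assemble into spines $\calT_V$ and $\calT_W$ with $\bdy U(\calT_V)$ isotopic to $F$; and edges dual to the disk regions of $F \setminus \Gamma$, each of which runs from one vertex to the other, crosses $F$ exactly once, and is a cone arc in each complementary ball --- hence manifestly unknotted at the stage it is added. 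The splitting associated to $\calT$ is then visibly a stabilization of $(M, F)$: $g$ stabilizations reach $\bdy U(\calT_V \cup e \cup \calT_W)$, and each remaining dual edge contributes one more. Until you can supply an argument of comparable force for your arcs $e_i$ --- controlling their global isotopy class in the complement, not merely their containment in balls --- your third paragraph is a statement of the difficulty, not a resolution of it.
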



\begin{proof}
We may assume, stabilizing if necessary, that $F$ has genus at least
one.  Now, $F$ cuts $M$ into a pair of handlebodies $V$ and $W$, both
of genus $g$.  Choose $g$ disks $\{ v_i \}$ properly embedded in $V$
so that the $v_i$ cut $V$ into a ball.  Choose $\{ w_j \}$ in $W$
similarly.  After a proper isotopy of the $v_i$ inside of $V$ we may
assume that all components of $F \setminus \Gamma$ are disks.  Here
$\Gamma = F \cap (( \cup v_i ) \cup ( \cup w_j ))$ is the {\em
Heegaard diagram} of $(F, v_i, w_j)$.

We build a pseudo-triangulation $\calT$ of $M$, with exactly two
vertices, by taking the dual of the two-complex $F \cup ( \cup v_i )
\cup ( \cup w_j )$.  It follows that $\calT$ has a tetrahedron for
every vertex of $\Gamma$, a face for every edge of $\Gamma$, an edge
for every face of $\Gamma$, an edge for each of the $2g$ disks and
exactly two vertices.

Let $\calT_V$ be the union of the edges of $\calT$ dual to the disks
$v_i$. Define $\calT_W$ similarly.  Let $e$ be any edge of $\calT$
connecting the two vertices of $\calT^0$.  Notice that $F$ is isotopic
to the boundary of a regular neighborhood of $\calT_V$.  After $g$
stabilizations of $F$ we obtain a surface $F'$ that is isotopic to the
boundary of a regular neighborhood of $\calT_V \cup e \cup \calT_W$.
Now a further sequence of stabilizations of $F'$ gives the splitting
associated to $\calT$.  We end with an easy exercise: if a splitting
$(M, G)$ is associated to a pseudo-triangulation $\calT$ then some
stabilization of $G$ is associated to the second barycentric
subdivision of $\calT$.
\end{proof}


We now describe the $1/4$ and $2/3$ bistellar flips in dimension
three.  These are also often called {\em Pachner moves}.  In any
triangulation, the $1/4$ flip replaces one tetrahedron by four; add a
vertex at the center of the chosen tetrahedron and cone to the faces.
Similarly the $2/3$ flip replaces a pair of distinct tetrahedra,
adjacent along a face, by three; remove the face, replace it by a dual
edge, and add three faces.  The $4/1$ and $3/2$ flips are the
reverses.  See \reffig{Flip} for illustrations of the $1/3$ and $2/2$
flips in dimension two.

\begin{figure}[ht!]
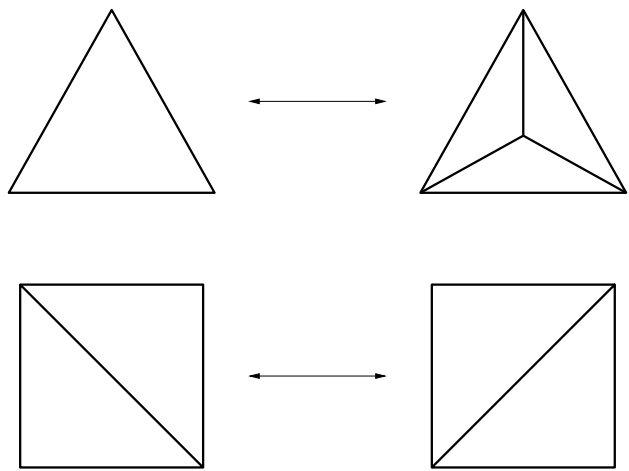

$$\begin{array}{c}
\psfig{file=\figdir/13flip, height = 2.5 cm} \\
\\
\\
\psfig{file=\figdir/22flip, height = 2.5 cm}
\end{array}$$
\caption{The $1/3$ and $2/2$ bistellar flips}
\label{Fig:Flip}
\end{figure}

Suppose that $(M, F)$ and $(M, F')$ are associated to triangulations
$\calK$ and $\calK'$.  Now, if $\calK'$ is obtained from $\calK$
via a $2/3$ bistellar flip then $(M, F')$ is the stabilization of
$(M, F)$.  When a $1/4$ flip is used then $(M, F')$ is the third
stabilization of $(M, F)$. 

We may now state an important corollary of the Hauptvermutung
(\ref{Thm:Hauptvermutung}), due to Pachner~\cite{Pachner91}.

\begin{theorem}
\label{Thm:Pachner}
Suppose that $M$ is a closed three-manifold and $\calK, \calK'$ are
triangulations of $M$.  Then there is a sequence of isotopies and
bistellar flips that transforms $\calK$ into $\calK'$. \qed
\end{theorem}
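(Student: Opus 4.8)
The plan is to present this as a genuine corollary of the Hauptvermutung (\refthm{Hauptvermutung}), with the combinatorial heart supplied by two classical facts about subdivisions. First I would invoke \refthm{Hauptvermutung} to produce a PL homeomorphism $h \from M \to M$, isotopic to the identity, that is PL with respect to $\calK$ and $\calK'$; that is, some subdivision $\calK_1$ of $\calK$ is carried simplicially by $h$ onto a subdivision $\calK_1'$ of $\calK'$. Because $h$ is isotopic to the identity it is realized by an ambient isotopy of $M$, and after applying this isotopy to $\calK$ we may assume that $\calK$ and $\calK'$ admit a common subdivision $\calL$ in the fixed manifold $M$. This uses up the ``isotopies'' permitted in the conclusion and leaves only bistellar flips to be produced.

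Next I would appeal to the theorem of Alexander and Newman: whenever $\calL$ subdivides a triangulation, the two are related by a finite sequence of \emph{stellar} moves, that is, starrings of simplices and their inverses. Applying this to the pairs $(\calK, \calL)$ and $(\calL, \calK')$ yields a chain of stellar moves carrying $\calK$ to $\calK'$. At this stage the entire difficulty is concentrated in converting these stellar moves into the more restrictive bistellar flips.

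The final step --- which is the substance of Pachner's contribution --- is to show that, within a triangulated manifold, every starring and every inverse stellar move is the composition of finitely many $1/4$, $4/1$, $2/3$ and $3/2$ flips. I expect this to be the main obstacle. One would star the chosen simplex and then cancel the resulting interior cells in a controlled order, recursing on the dimension of the starred simplex and exploiting that its link in a closed manifold is itself a combinatorial sphere; the bookkeeping required to keep each intermediate complex a triangulation, and to localize each move to a single flip, is exactly where the work lies, and it is also where the combinatorial-manifold hypothesis is essential. Granting this reduction, the stellar chain from the previous paragraph becomes a chain of bistellar flips, and prepending the isotopy of the first step transforms $\calK$ into $\calK'$ as required.
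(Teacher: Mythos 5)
You should first note that the paper contains no proof of this statement at all: it is stated with a \qed as a known result of Pachner~\cite{Pachner91}, presented as a corollary of the Hauptvermutung (\refthm{Hauptvermutung}), with Lickorish~\cite{Lickorish99} cited for a discussion. So there is no in-paper argument to match; the relevant comparison is with the literature. Your first two steps are fine and are exactly the classical reduction: the Hauptvermutung supplies a PL homeomorphism isotopic to the identity, so after an ambient isotopy the two triangulations admit a common subdivision, and the Alexander--Newman theory of stellar moves then relates each triangulation to that common subdivision. This is the route taken in Lickorish's exposition, so your roadmap is the standard one.

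The genuine gap is your third step, and you have in effect conceded it yourself. The claim that every starring (and inverse starring) inside a combinatorial manifold factors into $1/4$, $4/1$, $2/3$ and $3/2$ flips is not a finishing touch to be ``granted'': modulo the classical subdivision theory, it \emph{is} Pachner's theorem. The Hauptvermutung (Moise, 1952) and Alexander--Newman stellar theory (1930s) were available for decades, and the theorem nevertheless dates to 1991 precisely because this conversion is hard. Your one-sentence sketch --- star the simplex, cancel interior cells, recurse on dimension using that links are spheres --- does not survive contact with the details: starring an edge of a three-manifold triangulation is not locally a flip, and realizing it by flips requires, at a minimum, a bistellar-equivalence statement for the link sphere together with a mechanism for propagating lower-dimensional flips through cones, which forces one to handle balls and bounded complexes where ``link is a sphere'' fails. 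Pachner's own proof routes through elementary shellings of manifolds with boundary, and Lickorish's stellar-theoretic proof needs several auxiliary lemmas and a carefully ordered induction. As written, your proposal correctly reduces the theorem to its hard combinatorial core but does not prove that core; for the purposes of this paper the honest move is the one the author makes, namely to quote the result with a citation rather than claim a proof.
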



Lickorish's article~\cite{Lickorish99} gives a discussion of Pachner's
Theorem and its application to the construction of three-manifold
invariants.  Now we have:

\begin{proof}[Proof of \refthm{RS}]
Suppose that $(M, F)$ and $(M, F')$ are a pair of splittings.  Using
\reflem{ExistsATriangulation} stabilize each to obtain splittings,
again called $F$ and $F'$, which are associated to triangulations.  By
Pachner's Theorem (\ref{Thm:Pachner}) these triangulations are related
by a sequence of bistellar flips and isotopy.  Consecutive splittings
along the sequence are related by stabilization or destabilization.
The uniqueness of stabilization now implies that $(M, F)$ and $(M,
F')$ are stably equivalent.
\end{proof}

\section{Meridian disks}
\label{Sec:Meridian}

We carefully study meridian disks of handlebodies before diving into
the proof proper of Waldhausen's Theorem (\ref{Thm:Waldhausen}).

\subsection*{Meridianal pairs} 

If $V$ is a handlebody and $v_0 \subset V$ is a properly embedded
disk, with $\bdy v_0$ essential in $\bdy V$, then we call $v_0$ a {\em
meridianal} disk of $V$.  Fix now a splitting $(M, F)$.  Let $V$ and
$W$ be the handlebodies that are the closures of the components of $M
\setminus F$.  So $V \cap W = F$.

\begin{definition}
\label{Def:DestabilizingPair}
Suppose that $v_0$ and $w_0$ are meridianal disks of $V$ and $W$.  Suppose
that $\bdy v_0$ and $\bdy w_0$ meet exactly once, transversely.  Then we
call $\{v_0, w_0\}$ a {\em meridianal pair} for $(M, F)$.
\end{definition}

Note that $\{v_0, w_0\}$ is often called a {\em destabilizing pair}.
To explain this terminology, one must check that $V' = V \setminus
U(v_0)$ and $W' = W \cup \overline{U(v_0)}$ are both handlebodies.
Thus, taking $F' = \bdy V' = \bdy W'$, we find that $(M, F')$ is a
Heegaard splitting and that $(M, F) \equiv (M, F') \connect (S^3, T)$.  

\begin{remark}
\label{Rem:AmbientIsotopic}
If $\{v_1, w_1\}, \ldots, \{v_n, w_n\}$ are pairwise disjoint
meridianal pairs then $V' = V \setminus U(\cup_i v_i)$ is ambient
isotopic to $V'' = V \cup U(\cup_i w_i)$.  When $n = 1$ this is a
pleasant exercise and the general case then follows from disjointness.
\end{remark}

Furthermore, in this situation $V' = V \setminus U(\cup_i v_i)$ and
$W' = W \cup \overline{U(\cup_i v_i)}$ are handlebodies.  So $F' =
\bdy V' = \bdy W'$ gives a splitting $(M, F')$ and we have $(M, F) =
(M, F') \connect_n (S^3, T)$.

Conversely, fix a splitting equivalent to $(M, F) \connect_n (S^3,
T)$.  There is a natural choice of pairwise disjoint meridianal pairs
$\{v_1, w_1\}, \ldots, \{v_n, w_n\}$ so that the above construction
recovers $(M, F)$.  As we shall see, the choice of pairs is not
unique.  This leads to the non-uniqueness of destabilization.

Suppose now that we have two splittings $(M, F)$ and $(M, G)$ that we
must show are equivalent.  By the Reidemeister--Singer Theorem above we
may stabilize to obtain equivalent splittings $(M, F') \equiv (M,
G')$.  So $(M, F')$ admits two collections of pairwise disjoint
meridianal pairs.  These record the handles of $F'$ that must be cut
to recover $F$ or $G$.  If, under suitable conditions, we can make our
collections similar enough then we can deduce that the original
splittings $(M, F)$ and $(M, G)$ are equivalent.  Unfortunately, our
process for modifying collections of meridianal pairs does not
preserve pairwise disjointness.  To deal with this Waldhausen
introduces the notions of {\em good} and {\em great} systems of
meridianal disks.

\subsection*{Good and great systems}

Fix a splitting $(M, F)$ with handlebodies $V$ and $W$.  Fix an
ordered collection $v = \{ v_1, \ldots, v_n \}$ of disjoint meridian
disks of $V$.

\begin{definition}
\label{Def:Good/Great}
We say $v$ is a {\em good system} if there is an ordered collection $w
= \{ w_1, \ldots, w_n \}$ of disjoint meridian disks of $W$ so that
\begin{itemize}
\item
$\{v_i, w_i\}$ is a meridianal pair for all $i$ and
\item
$v_i \cap w_j = \emptyset$ whenever $i > j$.  
\end{itemize}
If the latter condition holds whenever $i \neq j$ then we call
$v$ a {\em great system}.  In either case we call
$w$ a {\em $v$--determined} system.
\end{definition}


Both conditions can be understood via the intersection matrix $A =
|v_i \cap w_j|$.  For $v$ to be a good system we must find a system
$w$ so that $A$ is upper-triangular, with ones on the diagonal.  For
$v$ to be great $A$ must be the identity matrix.

\begin{lemma}[Waldhausen 2.2, part 1]
\label{Lem:GoodImpliesGreat}
Every good system is great.
\end{lemma}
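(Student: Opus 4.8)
The plan is to work entirely on the surface $F$, tracking the boundary curves $\alpha_i = \bdy v_i$ and $\beta_j = \bdy w_j$ of the good system $v$ and an associated $v$--determined system $w$, and to reduce the intersection matrix $A = |v_i \cap w_j|$ to the identity by a sequence of handle slides of the disks $w_j$. Since $v_i \subset V$ and $w_j \subset W$ meet only along $F = V \cap W$, the entry $A_{ij}$ equals $|\alpha_i \cap \beta_j|$, and goodness says $A$ is upper triangular with $1$'s on the diagonal. I would prove the lemma by induction on the number $N = \sum_{i<j} A_{ij}$ of above--diagonal intersections: whenever $N > 0$ I want to produce a \emph{new} $v$--determined system with the same diagonal, still--vanishing below--diagonal part, and strictly smaller $N$. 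When $N = 0$ the matrix is the identity and $v$ is great by definition.

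For the inductive step I would choose $i$ \emph{minimal} among indices whose row of $A$ contains a nonzero above--diagonal entry. Minimality forces rows $1, \dots, i-1$ to be standard basis vectors, so each $\beta_k$ with $k < i$ meets only $\alpha_k$; together with the below--diagonal vanishing this shows that $\beta_i$ is disjoint from every $\alpha_k$ with $k \neq i$ and meets $\alpha_i$ in the single point $p_i = \alpha_i \cap \beta_i$ coming from the meridianal pair $\{v_i, w_i\}$. This is the crucial gain from choosing $i$ minimal: $\beta_i$ is a ``clean'' curve meeting the whole $\alpha$--system exactly once.

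Next I would locate the reducing move. Walking along $\alpha_i$ away from $p_i$, let $q$ be the first intersection with the $\beta$--system encountered; since any $\beta$ other than $\beta_i$ that meets $\alpha_i$ has index exceeding $i$, the point $q$ lies on some $\beta_j$ with $j > i$, and the subarc $c \subset \alpha_i$ from $p_i$ to $q$ has interior disjoint from all the $\beta$'s. I would then slide $w_j$ over $w_i$ along a band following a push--off of $c$, replacing $w_j$ by the band sum $w_j'$ of $w_j$ with a parallel copy of $w_i$; this is again a properly embedded disk, and its boundary $\beta_j'$ is still essential because it meets $\alpha_j$ once (as $A_{ji} = 0$), so $\{v_i, w_i\}$ and the diagonal are undisturbed. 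Because the push--off copy of $\beta_i$ meets only $\alpha_i$ and the band runs parallel to the clean arc $c$, the only intersection numbers that can change are those with $\alpha_i$; choosing the side of the push--off so that the crossing at $q$ cancels the copy's crossing near $p_i$ drops $A_{ij}$ by exactly one and leaves every other entry untouched. Hence the below--diagonal part stays zero, the diagonal is preserved, $w' = \{w_1, \dots, w_j', \dots, w_n\}$ is again $v$--determined, and $N$ has decreased by one, completing the induction.

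The main obstacle is verifying that this handle slide is genuinely \emph{clean}: that it removes one intersection of $\beta_j$ with $\alpha_i$ and creates no intersection anywhere else, neither with the other $\alpha_k$ nor among the $\beta$'s. This is exactly what the two preparatory choices secure — minimality of $i$ guarantees that $\beta_i$ and its push--off meet only $\alpha_i$, so sliding over $w_i$ introduces no stray crossings, while taking $q$ adjacent to $p_i$ along $\alpha_i$ makes $c$ disjoint from the whole $\beta$--system, so the band can be chosen disjoint from all curves and the two targeted crossings cancel. I would present the local cancellation picture (pushing the strand of $\beta_j$ across $c$ past $p_i$) with care, since this finger move is the one step that is genuinely geometric rather than bookkeeping.
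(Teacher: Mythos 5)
Your argument is correct and is essentially the paper's own proof: the paper likewise takes the smallest index $k$ whose row of the intersection matrix is nonstandard, observes that $w_k$ then meets the $v$--system in a single point, chooses a subarc $\alpha \subset \bdy v_k$ with interior disjoint from $w$ running from $\bdy w_k$ to some $\bdy w_l$ with $l > k$, and replaces $w_l$ by the result of sliding it over $w_k$ along $\alpha$, inducting on the intersection count. The only cosmetic difference is that the paper realizes the slide as the third essential boundary disk of the regular neighborhood $\overline{U(w_k \cup \alpha \cup w_l)}$, a description that its subsequent remark explicitly identifies with the band--sum/handle--slide formulation you use.
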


\begin{proof}
Suppose that $v = \{v_1, \ldots, v_n\}$ is good and $w = \{w_1,
\ldots, w_n\}$ is the given $v$--determined system.  We may assume that
$w$ has been isotoped to minimize $|v \cap w|$.  If $v$ is also great
with respect to $w$ then we are done.

Supposing otherwise, let $k$ be the smallest index so that $v_k \cap
w$ is not a single point.  It follows that $v \cap w_k$ is a single
point.  Let $\alpha$ be a subarc of $\bdy v_k$ so that $\bdy \alpha$
is contained in $\bdy w$, one point of $\bdy \alpha$ lies in $\bdy
w_k$, and the interior of $\alpha$ is disjoint from $w$.

It follows that the other endpoint of $\alpha$ lies in $\bdy w_l$ for
some $l > k$.  Let $N = \overline{U(w_k \cup \alpha \cup w_l)}$ be a
closed regular neighborhood of the indicated union.  Then $\bdy N \cap
W$ consists of three essential disks, two of which are parallel to
$w_k$ and $w_l$.  Let $w'_l$ be the remaining disk.  Let $w' = (w
\setminus \{w_l\}) \cup \{w'_l\}$.  It follows that $v$ is still good
with respect to $w'$ and the total intersection number has been
decreased.  By induction, we are done.
\end{proof}

\begin{remark}
\label{Rem:Handleslide}
The last step of the proof may be phrased as follows: obtain a new
disk $w'_l$ via a {\em handle-slide} of $w_l$ over $w_k$ along the arc
$\alpha$.  The hypotheses tell us that the chosen slide does not
destroy ``goodness.''
\end{remark}


\begin{lemma}[Waldhausen 2.2, part 2]
\label{Lem:GoodImpliesIsotopic}
Suppose that $v$ is a good system with respect to $w$.  Then $V
\setminus U(v)$ and $V \cup \overline{U(w)}$ are ambient isotopic in
$M$.
\end{lemma}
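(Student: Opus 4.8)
The plan is to reduce to the \emph{great} case, where the conclusion is immediate, and then to check that the handle-slides used in the reduction do not change the ambient isotopy class of $V \cup \overline{U(w)}$. First, suppose $(v, w)$ is already great. Then the intersection matrix is the identity, so $v_i \cap w_j = \emptyset$ for $i \neq j$ while $\bdy v_i$ and $\bdy w_i$ meet exactly once; that is, $\{v_1, w_1\}, \ldots, \{v_n, w_n\}$ are pairwise disjoint meridianal pairs. \refrem{AmbientIsotopic} then gives exactly $V \setminus U(v) \isotopic V \cup U(w)$, and hence $V \setminus U(v) \isotopic V \cup \overline{U(w)}$, as desired.

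For the general case I would follow the proof of \reflem{GoodImpliesGreat}: a good system is carried to a great one by a finite sequence of handle-slides of the $w$--disks (see \refrem{Handleslide}), each of which fixes $v$ and strictly decreases $|v \cap w|$. Since $v$, and hence $V \setminus U(v)$, is untouched throughout, it suffices to prove that a single handle-slide preserves the ambient isotopy class of $V \cup \overline{U(w)}$; composing these and applying the great case then completes the argument.

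So consider one handle-slide, replacing $w_l$ by $w'_l$, where $w'_l$ is the third disk of $\bdy N \cap W$ for $N = \overline{U(w_k \cup \alpha \cup w_l)}$ and $\alpha \subset F$ is the sliding arc. The disks of $w$ other than $w_k, w_l$ are disjoint from $N$, so I may ignore them. The key observation is that $\alpha$ lies in $F = \bdy V$, so $\overline{U(\alpha)}$ meets $V$ in a boundary collar; attaching it to $V \cup \overline{U(w_k)} \cup \overline{U(w_l)}$ amounts to gluing a ball along a disk in the boundary, an ambient isotopy, and the result is (up to isotopy) $V \cup N$. Reading this identity in reverse with the pair $\{w_k, w'_l\}$ in place of $\{w_k, w_l\}$ yields $V \cup N \isotopic V \cup \overline{U(w_k)} \cup \overline{U(w'_l)}$, so that chaining the two isotopies gives $V \cup \overline{U(w)} \isotopic V \cup \overline{U(w')}$. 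The main obstacle is exactly this last reversal: one must verify that $N$ is a ball (which holds because $w_k \cup \alpha \cup w_l$, being two disks joined by an arc, is contractible) whose frontier in $W$ consists of the three asserted meridian disks, so that $N$ is equally a regular neighborhood of $w_k \cup \alpha' \cup w'_l$ for a suitable arc $\alpha' \subset F \cap \bdy N$ and the slide can be undone on the $w'_l$ side. This is the standard handle-slide invariance, and once it is in hand the lemma follows.
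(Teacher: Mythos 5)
Your proof takes essentially the same route as the paper: handle the great case via \refrem{AmbientIsotopic}, then reduce the good case to it using the handle-slides from the proof of \reflem{GoodImpliesGreat}, so that only the invariance of $V \cup \overline{U(w)}$ under a single slide remains. The paper dismisses that last verification as ``an easy exercise,'' while you correctly carry it out by absorbing $\overline{U(\alpha)}$ into a boundary collar and noting that $N$ is equally a regular neighborhood of $w_k \cup \alpha' \cup w'_l$, which matches the intended argument.
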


\begin{proof}
By \fullref{Rem:AmbientIsotopic} the lemma holds when $w$ makes $v$
a great system.  Thus, by the proof of
\fullref{Lem:GoodImpliesGreat} all we need check is that $V \cup
\overline{U(w)}$ is isotopic to $V \cup \overline{U(w')}$, where $w$
and $w'$ are assumed to differ by a single handle-slide.  This
verification is an easy exercise.
\end{proof}


\subsection*{Reduction of $(M, F)$ by $v$}

Let $v$ be a good system with respect to $w$.  Since $v$ does not
separate $V$ the difference $V \setminus U(v)$ is a handlebody, as is
$W \setminus U(w)$.
By \reflem{GoodImpliesIsotopic} the unions $V \cup \overline{U(w)}$
and $W \cup \overline{U(v)}$ are also handlebodies.  Let $F(v)$ be the
boundary of $V \setminus U(v)$.  It follows that $(M, F(v))$ is a
Heegaard splitting.  We will call this the {\em reduction} of $(M, F)$
along $v$.  Taking $F(w)$ equal to the boundary of $W \setminus U(w)$
we likewise find that $(M, F(w))$ is a splitting.  With the induced
orientations, we find that $(M, F(v)) \equiv (M, F(w))$.  We
immediately deduce:

\begin{lemma}[Waldhausen 2.4]
\label{Lem:Transitivity}
If $v$ and $v'$ are both good systems with respect to $w$ then $(M,
F(v)) \equiv (M, F(v'))$.  \qed
\end{lemma}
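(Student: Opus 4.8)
The plan is to use the single system $w$ as a common bridge between the two reductions, so that both $F(v)$ and $F(v')$ turn out to be equivalent to the \emph{same} splitting $F(w)$. First I would recall the key equivalence established in the paragraph just before the lemma: for any good system $v$ with $v$--determined system $w$, one has $(M, F(v)) \equiv (M, F(w))$. This follows directly from \reflem{GoodImpliesIsotopic}, which provides an ambient isotopy of $M$ carrying the handlebody $V \setminus U(v)$ to the handlebody $V \cup \overline{U(w)}$. Since $V \cup \overline{U(w)}$ is precisely one of the two handlebodies bounded by $F(w)$ (its complement being $W \setminus U(w)$), the ambient isotopy carries the surface $F(v)$ to $F(w)$; checking that the induced orientations agree then upgrades this to an equivalence of splittings in the sense of \refdef{Equiv}.

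The second step is simply to apply this observation twice. Because $v$ is good with respect to $w$, we obtain $(M, F(v)) \equiv (M, F(w))$. Because $v'$ is good with respect to the very same $w$, the identical argument yields $(M, F(v')) \equiv (M, F(w))$. The essential point, and the reason the lemma is worth isolating, is that the determined system $w$ is shared by both hypotheses, so the target splitting $F(w)$ is literally the same object in both instances.

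The final step is to invoke transitivity. The relation $\equiv$ is an equivalence relation: homeomorphisms of $M$ isotopic to the identity form a group under composition, and composing two such homeomorphisms preserves both the isotopy-to-identity condition and the orientation-preserving condition on the splitting surface. Hence from $(M, F(v)) \equiv (M, F(w))$ and $(M, F(w)) \equiv (M, F(v'))$ we conclude $(M, F(v)) \equiv (M, F(v'))$, as desired.

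I do not expect a genuine obstacle here, since the whole content of the statement has already been distilled into the equivalence $(M, F(v)) \equiv (M, F(w))$ in the preceding discussion, and what remains is purely formal. The only point worth watching is the bookkeeping of orientations: one must ensure that the ambient isotopies associated to $v$ and to $v'$ induce compatible orientations on $F(w)$, so that the composite homeomorphism is genuinely orientation preserving on the splitting surface rather than merely a homeomorphism of pairs. This is automatic from the induced-orientation convention already fixed in the construction of the reduction.
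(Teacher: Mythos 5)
Your proposal is correct and matches the paper's reasoning exactly: the paper establishes $(M, F(v)) \equiv (M, F(w))$ in the paragraph immediately preceding the lemma (via \reflem{GoodImpliesIsotopic}, with the induced orientations) and then states the lemma with ``we immediately deduce,'' i.e.\ by applying that equivalence to both $v$ and $v'$ through the shared system $w$ and using transitivity of $\equiv$. Your attention to the orientation bookkeeping is the same point the paper handles with its phrase ``with the induced orientations.''
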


From the Reidemeister--Singer Theorem and the definitions we have:
 
\begin{lemma}[Waldhausen 2.5, part 1]
\label{Lem:StableSytems}
Suppose that $(M, F_1)$ and $(M, F_2)$ have a common stabilization
$(M, F)$.  Then there is a system $v \subset V$ good with respect to
$w \subset W$ and a system $x \subset V$ good with respect to $y
\subset W$ so that $(M, F(v)) \equiv (M, F_1)$ and $(M, F(x)) \equiv (M,
F_2)$. \qed
\end{lemma}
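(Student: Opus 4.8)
The plan is to unwind the definitions of \emph{stabilization} and of \emph{reduction} and then to observe that a collection of pairwise disjoint meridianal pairs is automatically a good (indeed great) system. Since the hypothesis already hands us the common stabilization $(M, F)$, the Reidemeister--Singer Theorem (\refthm{RS}) plays no role inside the argument; it only guarantees, in the intended applications, that such an $(M, F)$ exists at all.

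First I would record what the hypothesis says. That $(M, F)$ is a common stabilization means $(M, F) \equiv (M, F_1) \connect_m (S^3, T)$ and $(M, F) \equiv (M, F_2) \connect_n (S^3, T)$ for some $m, n \in \NN$, where as usual we identify stabilization with connect sum with $(S^3, T)$. I would then invoke the ``converse'' construction described just before the statement: a splitting equivalent to an $m$--fold stabilization of $(M, F_1)$ carries a natural collection of pairwise disjoint meridianal pairs $\{v_1, w_1\}, \ldots, \{v_m, w_m\}$ in $V$ and $W$ whose reduction recovers $(M, F_1)$. I set $v = \{v_1, \ldots, v_m\}$ and let $w = \{w_1, \ldots, w_m\}$ be the associated disks.

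Next I would check that $v$ is good with respect to $w$. Because the pairs are pairwise disjoint we have $v_i \cap w_j = \emptyset$ for every $i \neq j$ and $|v_i \cap w_i| = 1$, so the intersection matrix $A = |v_i \cap w_j|$ is the identity; thus $v$ is great, in particular good, and the reduction $(M, F(v))$ is defined. That this reduction undoes the $m$ stabilizations, so that $(M, F(v)) \equiv (M, F_1)$, is exactly the content of the construction recalled before the statement, with \refrem{AmbientIsotopic} ensuring that cutting along $v$ yields a genuine splitting. Running the identical argument on the structure witnessing $(M, F) \equiv (M, F_2) \connect_n (S^3, T)$ produces a second good system $x$, with $x$--determined system $y$, satisfying $(M, F(x)) \equiv (M, F_2)$.

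The only genuine content lies in the ``converse'' correspondence between an $m$--fold stabilization and a system of $m$ pairwise disjoint meridianal pairs recovering the base splitting; this is the step I would expect to require the most care, and it is precisely the fact assembled in the discussion preceding the lemma. Once it is granted, recognizing pairwise disjoint pairs as a great system via the intersection matrix, and reading off $(M, F(v)) \equiv (M, F_1)$, is immediate.
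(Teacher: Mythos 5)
Your proposal is correct and takes essentially the same approach as the paper: the paper states this lemma without a written proof, asserting it follows ``from the Reidemeister--Singer Theorem and the definitions,'' and what you spell out --- the converse correspondence between an $n$--fold stabilization and $n$ pairwise disjoint meridianal pairs, together with the observation that disjointness makes the intersection matrix the identity, so the pairs form a great (hence good) system whose reduction recovers the base splitting --- is exactly the content of the discussion preceding the lemma that the \qed{} relies on. Your side remark is also accurate: since the common stabilization is hypothesized, Reidemeister--Singer is not needed inside the argument but only to furnish that hypothesis in applications such as the proof of Waldhausen's Theorem.
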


\begin{remark}
\label{Rem:StableSystems}
We now have one decomposition and two sets of instructions for
reducing (cutting open trivial handles).  If we knew, for example,
that $y$ was a $v$--determined system then we would be done; but this
is more than we actually need.
\end{remark}

\subsection*{Getting along with your neighbors}

\begin{lemma}[Waldhausen 2.5, part 2]
\label{Lem:EmptyIntersection}
In the preceding lemma, $F$, $v$, $x$, $w$, $y$ can be chosen so that
$v \cap x = w \cap y = \emptyset$.
\end{lemma}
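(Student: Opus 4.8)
The plan is to begin with the systems supplied by \reflem{StableSytems} and, appealing to \reflem{GoodImpliesGreat}, to assume at the outset that $v$ is \emph{great} with respect to $w$ and that $x$ is great with respect to $y$; thus in each handlebody we have two families of pairwise disjoint meridianal disks, and the task splits into making the $v$--disks disjoint from the $x$--disks in $V$ and, separately, the $w$--disks disjoint from the $y$--disks in $W$. First I would put $v$ and $x$ in general position inside $V$, so that $v \cap x$ is a disjoint union of arcs and circles lying in the disks. The circle intersections are the benign case: an innermost circle on some $x_j$ bounds a subdisk $D \subset x_j$ meeting $v$ only in its boundary, and this circle also bounds a subdisk on the relevant $v_i$; since the handlebody $V$ is irreducible the resulting sphere bounds a ball, and pushing $v_i$ across that ball removes the circle by an isotopy that fixes every boundary curve on $F$ and so disturbs nothing else. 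Iterating clears all circles from $v \cap x$.

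The genuine difficulty is the arcs of $v \cap x$, equivalently the intersection points $\bdy v \cap \bdy x$ on $F$. These cannot in general be removed by an ambient isotopy of the disks alone: such an isotopy preserves the isotopy class of each $\bdy v_i$ on $F$, and two essential curves that both bound disks in $V$ may still be forced to cross. This is exactly where the latitude granted by the statement---that $F$, $v$, $x$, $w$, $y$ \emph{may be chosen}---is essential. The plan is to remove an outermost such arc by a \emph{handle-slide} in the spirit of \refrem{Handleslide}: an outermost disk $D \subset x_j$ guides a slide of one $x$--disk over another that undoes the crossing with $v_i$, and, when no such slide is available without more room, first to stabilize $F$---which the statement permits---so as to create it. Each slide or stabilization is to be performed so as to keep $x$ great, so that by \reflem{GoodImpliesIsotopic} the reduced surface is unchanged up to isotopy; and since the determining system may be re-chosen, \reflem{Transitivity} keeps $(M, F(x)) \equiv (M, F_2)$ intact, and symmetrically $(M, F(v)) \equiv (M, F_1)$. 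Iterating until $\bdy v \cap \bdy x = \emptyset$ yields $v \cap x = \emptyset$ in $V$.

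Once $v$ and $x$ are disjoint in $V$ I would treat the $W$--side by the same argument applied to $w$ and $y$. The reason this does not undo the previous step is that every move now required---general position, innermost-disk isotopies, and handle-slides---can be carried out in the closed handlebody $\overline{W}$, with all guiding arcs chosen in $F \cup W$ and disjoint from $v \cup x$; consequently the interiors of the $v$-- and $x$--disks are never pushed into one another, and $v \cap x = \emptyset$ is maintained while we arrange $w \cap y = \emptyset$. Restoring greatness of the re-chosen determining systems (\reflem{GoodImpliesGreat}) and again invoking \reflem{Transitivity} keeps the reductions to $F_1$ and $F_2$ unchanged.

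The main obstacle is precisely the coupling of the two handlebodies through $F$: because $\bdy v$, $\bdy x$, $\bdy w$, $\bdy y$ all live on the single surface $F$, any modification of one system alters the diagram seen by the others, and one must verify at every step that greatness and both reduction-equivalences survive and that the $V$-- and $W$--phases are genuinely independent. I expect the delicate point to be not any single geometric move but the bookkeeping that shows the outermost-arc crossings can always be cleared by slides and auxiliary stabilizations without enlarging the problem, so that the process terminates.
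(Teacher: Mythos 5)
Your circle step is essentially workable, though the paper uses disk surgery on an innermost disk together with \reflem{Transitivity} rather than an isotopy across a ball: the ball bounded by $D \cup D''$ may contain other pieces of $v$ and $x$, so your claim that the push ``disturbs nothing else'' is not automatic, and the surgery formulation (replace $x$ by $(x \setminus D') \cup D$, note the boundaries are unchanged, invoke \reflem{Transitivity}) avoids the issue. The genuine gap is in the arc step, which is the heart of the lemma. You correctly sense that arcs of $v \cap x$ cannot be removed by isotopy and that stabilization must enter, but you never specify what the stabilization is or why your process terminates. Handle-slides in the sense of \refrem{Handleslide} will not do this job: that move reduces intersections between a system and its \emph{determined} system (there the guiding arc $\alpha$ lies on $\bdy v_k$ with interior disjoint from $w$, and goodness is what guarantees progress), whereas here $v$ and $x$ are two good systems in the \emph{same} handlebody with no such relation. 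Sliding $x_j$ over another $x$--disk $x_{j'}$ replaces $\bdy x_j$ by a band sum with $\bdy x_{j'}$, changing $|\bdy x_j \cap \bdy v|$ by roughly $|\bdy x_{j'} \cap \bdy v|$, with no reason to decrease; and your fallback ``stabilize to create room'' names no mechanism, no complexity that strictly drops, and no check that goodness of $x$ with respect to $y$ and the equivalences $(M, F(x)) \equiv (M, F_2)$, $(M, F(v)) \equiv (M, F_1)$ survive each unspecified move.

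The paper's mechanism is concrete and different: given an arc $k$ of $v_i \cap x_j$, stabilize \emph{along the intersection arc itself}, taking $F' = \bdy(V \setminus U(k))$. This single move splits $v_i$ into $v_i', v_i''$ and $x_j$ into $x_j', x_j''$, and the spanning disks $\overline{w}, \overline{y}$ of the tunnel $U(k) \cap V$ enlarge $w$ and $y$ to systems with respect to which the new $v'$ and $x'$ are still good; moreover $(M, F'(w')) \equiv (M, F(w))$ and $(M, F'(y')) \equiv (M, F(y))$, and crucially $|v' \cap x'| < |v \cap x|$ while $|w' \cap y'| = |w \cap y|$. That strict decrease is exactly the termination measure your write-up lacks, and the equality on the $W$--side is what makes the two sides independent, so that the symmetric operation in $W$ (your $V$-then-$W$ plan is fine in spirit) does not undo the first phase. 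Finally, your opening reduction to \emph{great} systems via \reflem{GoodImpliesGreat} is harmless but buys nothing here: greatness constrains $v \cap w$ and $x \cap y$, not the intersections $v \cap x$ and $w \cap y$ that the lemma is about, and the handle-slides used to achieve greatness would themselves need to be checked against $x$ and $y$.
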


\begin{proof}
We proceed in several steps.

\medskip
{\bf Step 1}\qua Apply a small isotopy to ensure:
\begin{itemize}
\item
$(x \cap y) \cap (v \cup w) = \emptyset = (v \cap w) \cap (x \cup y)$.
\item
$v \cap x$ and $w \cap y$ are collections of pairwise disjoint simple
  closed curves and arcs. 
\item
$v \cap x \cap F = \bdy(v \cap x)$ and $w \cap y \cap F = \bdy(w \cap
  y)$. 
\end{itemize}

\medskip
{\bf Step 2}\qua Now we eliminate all simple closed curves of
intersection between $v$ and $x$.  Suppose that $v \cap x$ contains a
simple closed curve.  Then there is an {\em innermost} disk $D \subset
v$ so that $D \cap x = \bdy D$.  Use $D$ to perform a {\em disk
surgery} on $x$: since $x$ is a union of disks, $\bdy D$ bounds a
disk, say $D' \subset x$.  Let $x'$ be a copy of $(x \setminus D')
\cup D$, after a small isotopy supported in $U(D)$.  Arrange matters
so that $|v \cap x'| \leq |v \cap x|$.  By \reflem{Transitivity}, $(M,
F(x)) \equiv (M, F(x'))$.  Proceeding in this fashion, remove all
simple closed curves of $v \cap x$.  Apply the same procedure to
remove all simple closed curves of $w \cap y$.

\medskip{\bf Step 3}\qua Now we eliminate all arcs of intersection between $v$
and $x$.  To do this, we will replace $F$, and the various systems, by
highly stabilized versions.  Let $k$ be an arc of $v_i \cap x_j$.  Let
$v_i'$ and $v_i''$ be the two components of $v_i \setminus U(k)$.
These are both disks.  Similarly, let $x_j'$ and $x_j''$ be the two
components of $x_j \setminus U(k)$.  Choose notation so that $|v_i'
\cap w_i| = 1$, $|v_i'' \cap w_i| = 0$, and similarly for $x_j'$ and
$x_j''$.  Let $\overline{w}$ and $\overline{y}$ be disjoint spanning
disks of the cylinder $U(k) \cap V$.  Take $F' = \bdy (V \setminus
U(k))$.

Observe that
\begin{itemize}
\item
$(M, F')$ is a Heegaard splitting and is a stabilization of $(M, F)$.
\item
The system 
$$
v' = \{ v_1, \ldots, v_{i-1}, v_i', v_i'', v_{i+1}, \ldots, v_n \}
$$ 
is good with respect to the system 
$$
w' = \{ w_1, \ldots, w_i, \overline{w}, w_{i+1}, \ldots, w_n \}.
$$
\item
The same holds for $x'$ and $y'$.
\item
$(M, F'(w')) \equiv (M, F(w))$ and $(M, F'(y')) \equiv (M, F(y))$.
\item
$|v' \cap x'| < |v \cap x|$ and $|w' \cap y'| = |w \cap y|$.
\end{itemize}
Repeated stabilization in this fashion removes all arcs of
intersection and so proves the lemma.
\end{proof}

\section{The proof of Waldhausen's Theorem}
\label{Sec:Proof}

We may now begin the proof of: 

\begin{theorem}[Waldhausen 3.1]
\label{Thm:Waldhausen}
Suppose that $(S^3, G)$ is an unstabilized Heegaard splitting.  Then
$(S^3, G) \equiv (S^3, S^2)$.
\end{theorem}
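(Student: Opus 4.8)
The plan is to argue by contradiction, using the Reidemeister--Singer Theorem to place $(S^3, G)$ and the standard splitting into a common stabilization and then exploiting the unstabilized hypothesis to force $\genus(G) = 0$. Concretely, suppose that $(S^3, G)$ is unstabilized. By \refthm{RS} the splittings $(S^3, G)$ and $(S^3, S^2)$ have a common stabilization $(S^3, F)$, with handlebodies $V$ and $W$. Applying \reflem{StableSytems} and \reflem{EmptyIntersection} I obtain a good system $v$ (with $v$--determined system $w$) and a good system $x$ (with $x$--determined system $y$) so that $(S^3, F(v)) \equiv (S^3, G)$, so that $(S^3, F(x)) \equiv (S^3, S^2)$, and so that $v \cap x = w \cap y = \emptyset$. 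By \reflem{GoodImpliesGreat} both systems are in fact great, so $\{x_j, y_j\}$ is an honest meridianal pair for $(S^3, F)$ for every $j$, and the reduction $F(x)$ has genus zero, so $x$ and $y$ each consist of $\genus(F)$ disks.

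If $\genus(F) = 0$ then $F = S^2$ and $v = \emptyset$, whence $(S^3, G) \equiv (S^3, F(v)) \equiv (S^3, S^2)$ and we are done, since all genus zero splittings of $S^3$ are standard by \refthm{IsotopyOfBalls}. So assume $\genus(F) \geq 1$; I will show this contradicts the unstabilized hypothesis. The idea is that the meridianal pair $\{x_1, y_1\}$ should survive the reduction of $(S^3, F)$ along $v$ to yield a meridianal pair in $(S^3, G)$, exhibiting $G$ as a stabilization. The disjointness arranged in \reflem{EmptyIntersection} is exactly what makes the two disks survive: $x_1$ is disjoint from $v$, so it persists as a properly embedded disk in the reduced handlebody $V \setminus U(v)$; dually $y_1$ is disjoint from $w$, so it persists in $W \setminus U(w)$, and under the ambient isotopy of \reflem{GoodImpliesIsotopic} identifying $V \cup \overline{U(w)}$ with the handlebody on the far side of $F(v)$ it becomes a meridian disk there. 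Essentiality comes for free: since $\bdy x_1$ and $\bdy y_1$ meet exactly once on $F(v)$, each is nonseparating, hence essential. Thus $\{x_1, y_1\}$ descends to a meridianal pair for $(S^3, G)$, so $(S^3, G) \equiv (S^3, G') \connect (S^3, T)$ for some splitting $G'$, contradicting that $(S^3, G)$ is unstabilized. Hence $\genus(F) = 0$ and $(S^3, G) \equiv (S^3, S^2)$.

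The main obstacle is exactly this descent step, and the subtlety is that $v \cap x = \emptyset$ controls how $x_1$ meets the $V$--reduction while $w \cap y = \emptyset$ controls how $y_1$ meets the $W$--reduction, so the two disks of the pair are seen cleanly in the two \apriori different descriptions $V \setminus U(v)$ and $V \cup \overline{U(w)}$ of the reduced handlebody. Reconciling these---equivalently, verifying that $\bdy x_1$ and $\bdy y_1$ can be realized as curves on the single surface $F(v)$ still meeting exactly once after the reduction---is where the disjointness hypotheses must be used with care; this is essentially the content of \refrem{StableSystems}, where it is noted that one does not need the stronger statement that $y$ is $v$--determined, only enough compatibility to run an argument of this kind. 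An alternative route to the same conclusion, avoiding the explicit surviving pair, is to compare $v$ and $x$ through a common determined system and invoke the transitivity of reduction (\reflem{Transitivity}): if $v$ and $x$ can be arranged to be good with respect to a single system, then $(S^3, F(v)) \equiv (S^3, F(x))$ at once, giving $(S^3, G) \equiv (S^3, S^2)$ directly.
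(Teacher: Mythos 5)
Your setup matches the paper's (up to swapping the roles of $v$ and $x$), but the ``descent step'' at the heart of your argument---that the meridianal pair $\{x_1, y_1\}$ survives the reduction along $v$---is a genuine gap, and it is exactly the hard point of the theorem. The reduction along $v$ replaces $V$ by $V' = V \setminus U(v)$ and $W$ by $W' = W \cup \overline{U(v)}$, with new splitting surface $F(v) = \bdy V'$. The hypothesis $v \cap x = \emptyset$ does make $x_1$ a properly embedded disk in $V'$ with boundary on $F(v)$. But nothing in \reflem{EmptyIntersection} controls $y \cap v$: the curve $\bdy y_1 \subset F$ may cross $\bdy v_i$ many times, in which case $\bdy y_1$ does not even lie on $F(v)$, and $y_1$ is not properly embedded in $W'$. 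Your attempted repair---viewing $y_1$ inside $W \setminus U(w)$ and transporting it by the ambient isotopy of \reflem{GoodImpliesIsotopic} identifying $V \setminus U(v)$ with $V \cup \overline{U(w)}$---does not work as stated, because that isotopy is uncontrolled away from the handlebodies: there is no reason it carries $y_1$ to a disk still meeting $x_1$ exactly once, or in any predictable way at all. \refrem{StableSystems}, which you cite at this point, contains no argument; it merely observes that full $v$--determinacy of $y$ is more than is needed. So the assertion ``$\{x_1, y_1\}$ descends to a meridianal pair'' is unsupported, and with it the whole contradiction.

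The paper's proof shows what controlling $y$ against $v$ actually costs. Waldhausen first proves (his 3.2) that, altering $y$ only---by bigon surgery and handle slides, using both $v \cap x = \emptyset$ and $w \cap y = \emptyset$---one can arrange $|y \cap v_n| \leq 1$ for the \emph{single} disk $v_n$; even this weak control takes a full lemma. He then assumes $F$ has minimal genus among such common stabilizations and argues by cases: if $|y \cap v_n| = 1$, he swaps $v_n$ into the system $x$ and $w_n$ into the corresponding determined system, so that the first trivial handle cut off in passing to $G$ coincides with the first one cut off in passing to $S^2$; cutting it lowers $\genus(F)$ and contradicts minimality. If $y \cap v_n = \emptyset$, he adjoins the pair $(v_n, w_n)$ to $(x, y)$ and concludes directly that $G$ is stabilized. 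Your proposal uses minimality of $F$ nowhere, which is a structural symptom of the missing descent mechanism. Your ``alternative route'' is moreover circular: if $v$ and $x$ were both good with respect to a single system, then $|v| = |x|$, that is, $\genus(F) - \genus(G) = \genus(F)$, which is the conclusion $\genus(G) = 0$ itself. (A smaller point: upgrading both systems to great via \reflem{GoodImpliesGreat} replaces $y$ by handle-slid disks whose slide arcs lie on $\bdy x$, which is not known to be disjoint from $w$; so greatness may destroy the property $w \cap y = \emptyset$ that you arranged via \reflem{EmptyIntersection}.)
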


This, and the uniqueness of stabilization, immediately implies our
earlier version of the theorem: up to isotopy, the three-sphere has a
unique splitting of every genus.

Let $(S^3, G)$ be an unstabilized splitting.  By
Lemmas~\ref{Lem:StableSytems} and~\ref{Lem:EmptyIntersection} there is
a splitting $(S^3, F)$ that is a common stabilization of $(S^3, G)$
and $(S^3, S^2)$ with several useful properties.  First, let $V, W$
denote handlebodies so that $V \cup W = S^3$, $V \cap W = F$.  Next,
note that $\genus(F) \geq \genus(G)$.  Letting $n = \genus(F)$ and $m
= \genus(F) - \genus(G)$ we assume that;
\begin{itemize}
\item
There are good systems $v = \{ v_1, \ldots, v_n \}$ and $x = \{ x_1,
\ldots, x_m \}$ in $V$.
\item
There is a $v$--determined system $w = \{ w_1, \ldots, w_n \}$ and an
$x$--determined system $y = \{ y_1, \ldots, y_m \}$ in $W$.  
\item
$(S^3, S^2) \equiv (S^3, F(v))$ and $(S^3, G) \equiv (S^3, F(x))$.
\item
$x \cap v = \emptyset = y \cap w$.  
\end{itemize}

Suppose that the surface $F$ is also chosen with minimal possible
genus.  We shall show, via contradiction, that $\genus(F) = 0$.  Since
$F$ was a stabilization of $G$ it will follow that $\genus(G) = 0$, as
desired.   So assume for the remainder of the proof that $n > 0$.

\begin{lemma}[Waldhausen 3.2]
Altering $y$ only we can ensure that $|y \cap v_n| \leq 1$.  
\end{lemma}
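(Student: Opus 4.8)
The plan is to reduce the geometric intersection number $|y \cap v_n|$ by handle-sliding the disks of $y$ over one another, all the while keeping $y$ an $x$--determined system so that the reduction $(S^3, F(y)) \equiv (S^3, G)$ is preserved. First note that, as $v_n \subset V$ and each $y_j \subset W$ meet only along $F = V \cap W$, the set $y \cap v_n$ is a finite collection of transverse points of $\bdy y \cap \bdy v_n$ on $F$. Since $y \cap w = \emptyset$, I would cut $F$ along $\bdy w = \bigcup_i \bdy w_i$; because $v$ is great with respect to $w$ (\reflem{GoodImpliesGreat}), the curve $\bdy v_n$ meets $\bdy w_n$ once and is disjoint from the remaining $\bdy w_i$, so it becomes a single arc $\alpha$ in the resulting planar surface $P$, while each $\bdy y_j$ becomes a disjoint simple closed curve. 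In a planar surface every simple closed curve separates; hence, after removing bigons, each $\bdy y_j$ can be isotoped to meet $\alpha$ in at most one point, and it meets $\alpha$ at all exactly when its mod $2$ intersection number with $\bdy v_n$ is nonzero.

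The reduction move is then as follows. If two disks $y_j$ and $y_{j'}$ with $j < j'$ both cross $v_n$, slide $y_{j'}$ over $y_j$ along a band whose interior avoids $\bdy v_n$; this is possible because $\bdy v_n$ is non-separating, its dual being $\bdy w_n$. The new disk has boundary in the class $[\bdy y_{j'}] + [\bdy y_j]$, so its mod $2$ intersection with $\bdy v_n$ becomes $1 + 1 = 0$, and a further bigon removal pushes it off $v_n$ entirely, strictly decreasing $|y \cap v_n|$. The key bookkeeping point is that sliding a higher-indexed disk over a lower-indexed one introduces a new intersection with $x$ only in position $(j, j')$ with $j < j'$, that is, strictly above the diagonal; thus the matrix $|x_i \cap y_k|$ stays upper triangular with ones on the diagonal, $y$ remains good (hence $x$--determined by \reflem{GoodImpliesGreat}), and $(S^3, F(y))$ stays in the class of $(S^3, G)$ by \reflem{GoodImpliesIsotopic}. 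Iterating cancels the crossing disks in pairs until at most one survives, whence $|y \cap v_n| \leq 1$.

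The main obstacle is to carry out each slide and each bigon removal so that its effect on the intersection numbers with $x$ is exactly the advertised one. Here the hypothesis $x \cap v = \emptyset$ is what saves us: since $\bdy x$ is disjoint from $\bdy v_n$, the bigons between $\bdy y_j$ and $\bdy v_n$ meet $\bdy x$ only along their $\bdy y_j$--sides, so I can arrange the isotopies not to drag $\bdy y_j$ across $\bdy x$ and hence not to create any below-diagonal intersection with $x$. Similarly I would route the sliding band close to $\bdy y_j$ so that the only new intersections of $y'_{j'}$ with $x$ are the forced homological ones, keeping $\{x_{j'}, y'_{j'}\}$ a meridianal pair. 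Verifying that these choices are simultaneously possible, and that the procedure therefore terminates without reintroducing crossings with $v_n$ (only $y_{j'}$ is ever altered), is the technical heart of the argument.
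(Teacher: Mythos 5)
There is a genuine gap, and it sits at the pivotal step. You cut $F$ along $\bdy w$ to obtain a planar surface $P$ and claim that, after bigon removal, each $\bdy y_j$ can be \emph{isotoped} to meet the arc $\alpha$ (the image of $\bdy v_n$) in at most one point, governed by the mod $2$ intersection number. This is false: in a planar surface the minimal geometric intersection of a simple closed curve with a properly embedded arc is not controlled by its parity. For example, in a four-holed sphere with boundary components $A, B, C, D$, an arc from $A$ to $B$ that hooks around $C$ meets a curve separating $\{A,B\}$ from $\{C,D\}$ in two points, and no bigon exists, so both intersections are essential; configurations of exactly this type can occur between $\alpha$ and $\bdy y_j$. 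The same flaw recurs after your handle-slide: algebraic intersection zero with $\bdy v_n$ does not permit ``a further bigon removal'' to push the new disk off $v_n$ --- surface isotopy simply cannot do this. What is missing is the three-dimensional input that drives Waldhausen's Case~1: $C = W \setminus U(w)$ is a ball with spots and $y$ cuts $C$ into three-balls, so a subarc of $\bdy v_n \cap \bdy C$ with endpoints on $y_j$ and interior disjoint from $y$ cobounds, together with an arc $\beta$ properly embedded in $y_j$, an embedded bigon $D \subset C$. Surgering $y_j$ along $D$ (keeping the component of $y_j \setminus \beta$ that meets $x_j$ once) is \emph{not} an isotopy --- it changes the isotopy class of $\bdy y_j$ in $P$ --- but the result is still an embedded disk disjoint from $w$ because the whole construction lives in the ball $C$, and it strictly reduces $|y_j \cap v_n|$. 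Since the new boundary arc runs along $\bdy v_n$ and $x \cap v = \emptyset$, the surgered system is automatically still $x$--determined; that, rather than careful routing of isotopies, is how the hypothesis $x \cap v = \emptyset$ does its work.

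Your slide move is close to the paper's Case~2 but is likewise under-specified: you band-sum along an arbitrary band avoiding $\bdy v_n$ and then appeal to the unavailable isotopy, and you must separately worry about the band crossing $\bdy x$ en route. Waldhausen instead slides $y_j$ over $y_i$ (with $i < j$) along a subarc of $\bdy v_n$ \emph{itself} whose interior is disjoint from $y$: the band runs parallel to $\bdy v_n$, pushed to one side, so the two crossings with $v_n$ cancel outright in the band sum with no isotopy claim, and disjointness of the band from $x$ is again free from $x \cap v = \emptyset$, so upper triangularity is preserved exactly as in \refrem{Handleslide}. One smaller slip: you invoke \reflem{GoodImpliesGreat} to assume $v$ is great with respect to $w$, but that lemma \emph{replaces} $w$ and could destroy the property $w \cap y = \emptyset$ arranged by \reflem{EmptyIntersection}. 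It is also unnecessary: $v_n$ carries the last index, so goodness alone gives $v_n \cap w_j = \emptyset$ for $j < n$ and $|\bdy v_n \cap \bdy w_n| = 1$, whence $\bdy v_n \cap \bdy C$ is already a single arc.
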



\begin{proof}
There are two possible cases.

\medskip{\bf Case 1}\qua Suppose some element of $y$ hits $v_n$ in at least two
points. Let $C = W \setminus U(w)$.  (This is a three-ball with {\em
spots}.)  Note that $y$ is a collection of disjoint disks in $C$.
Thus the disks $y$ cut $C$ into a collection of three-balls.  Note
that $w \cap \bdy v_n$ is a single point.  Hence $\gamma = \bdy v_n
\cap \bdy C$ is a single arc with interior disjoint from the spots of
$\bdy C$.  Since some element of $y$ hits $\bdy v_n$ twice there is an
element $y_j \in y$ and a subarc $\alpha$ contained in the interior of
$\gamma$ so that $\alpha \cap U(w) = \emptyset$, $\bdy \alpha \subset
y_j$, and $\interior(\alpha) \cap y = \emptyset$.

Choose an arc $\beta$, properly embedded in $y_j$, so that $\bdy \beta
= \bdy \alpha$.  Then $\alpha \cup \beta$ bounds a disk $D \subset C$
so that $D \cap \bdy C = \alpha$ and $D \cap y = \beta$.  Again, this
is true because $C \setminus y$ is a collection of three-balls.  (The
disk $D$ is called a {\em bigon}.)  Let $E$ be the component of $y_j
\setminus \beta$ that meets $x_j$ exactly once.  Let $y_j' = D \cup
E$.  (The modern language is that $y_j'$ is obtained from $y_j$ via
{\em bigon surgery} along $D$.)

Since $v \cap x = \emptyset$ it follows that $\alpha \cap x_j =
\emptyset$.  Thus $y_j'$ meets $x_j$ exactly once, $x_i \cap y_j' =
\emptyset$ for all $i > j$, and $y_i \cap y_j' = \emptyset$ for all $i
\neq j$.  Thus $y' = (y \setminus \{ y_j \}) \cup \{ y_j' \}$ is an
$x$--determined system.  Furthermore $y' \cap w = \emptyset$ and $y'$
meets $v_n$ fewer times than $y$ does.

\medskip
{\bf Case 2}\qua Suppose every disk in $y$ meets $v_n$ in at most one
point, and $|y \cap v_n| \geq 2$.  Define $C = W \setminus U(w)$ as
above.  There is an arc $\alpha \subset (\bdy v_n) \cap \bdy C$ so
that $\alpha \cap y = \bdy \alpha$.  We may assume that one point of
$\bdy \alpha$ lies in $y_i$ while the other lies in $y_j$, for $i <
j$.  Let $y_j'$ be the disk obtained by doing a handle-slide of $y_j$
over $y_i$ along the arc $\alpha$.  As indicated in
\refrem{Handleslide}, the system $y' = (y \setminus \{ y_j \}) \cup \{
y_j' \}$ has all of the desired properties, and also reduces
intersection with $v_n$.

Finally, iterating Case 1 and then Case 2 proves the lemma.
\end{proof}

\begin{proof}[Proof of \refthm{Waldhausen}](Waldhausen 3.3)

{\bf Case 1}\qua If $y \cap v_n \neq \emptyset$ then by the above lemma
we can assume that $y \cap v_n$ is a single point.  Suppose that $y_j$
meets $v_n$.

Define
$$
x' = \{ x_1, \ldots, x_{j-1}, x_{j+1}, \ldots, x_m, v_n \},
$$
$$
y' = \{ y_1, \ldots, y_{j-1}, y_{j+1}, \ldots, y_m, y_j \},
$$ 
and notice that $x'$ is good with respect to $y'$.
\reflem{Transitivity} implies that $(S^3, F(y')) \equiv (S^3, F(x'))$
and $(S^3, F(y)) \equiv (S^3, G)$.  Since $y$ and $y'$ are equal as
sets $(S^3, F(y')) \equiv (S^3, F(y))$.  So $(S^3, F(x')) \equiv (S^3,
G)$.

Now we replace $y'$ by another $x'$--determined system $y''$ by
replacing $y_m'$ by $w_n$.  That is, define
$$y'' = \{ y_1', \ldots, y_{m-1}', w_n \}.$$ The meridianal pair
$(v_n, w_n) = (x_m', y_m'')$ represents the first trivial handle cut
off in the process of transforming $(S^3, F)$ into $(S^3, F(v))$ or
$(S^3, F(x'))$.  So the first step in the process of transforming
$(S^3, F)$ into $(S^3, F(x')) \equiv (S^3, G)$ is the same as the
first step in going from $(S^3, F)$ to $(S^3, F(v)) \equiv (S^3,
S^2)$.  Let $(S^3, F')$ be the Heegaard decomposition obtained from
$(S^3, F)$ by cutting off this trivial handle. Then $(S^3, F')$ has
the same properties as $(S^3, F)$ but $F'$ has lower genus than $F$.
This contradicts the minimality of $F$.

\medskip
{\bf Case 2}\qua If $y \cap v_n = \emptyset$ then we enlarge $x$ and $y$
to $x^*$ and $y^*$ by adding $v_n$ and $w_n$.  That is, we define
$$x^* = \{ x_1, \ldots, x_m, v_n \}$$
$$y^* = \{ y_1, \ldots, y_m, w_n \}.\leqno{\hbox{and}}$$ 
Suppose in $(S^3, F)$ we cut
off the trivial handles of $(x^*, y^*)$, obtaining $(S^3, F(x^*))$.
Then we effectively cut off all the trivial handles of $(x,y)$,
obtaining $(S^3, F(x)) \equiv (S^3, G)$ and additionally cut off the
trivial handle represented by $(v_n, w_n)$.  

So $(S^3, F(x^*))$ is obtained from $(S^3, G)$ by removing a trivial
handle.  That is, $(S^3, G) \equiv (S^3, F(x)) \equiv (S^3, F(x^*))
\connect (S^3, T)$.  Thus $G$ is a stabilized splitting.  This is a
contradiction.
\end{proof}

\section{Remarks}
\label{Sec:Remarks}

\subsection*{Doubling a handlebody}
Suppose that $T \subset S^2 \cross S^1$ is the torus obtained by
taking the product of the equator of the two-sphere and the $S^1$
factor.  Let $(M_g, F_g) = \connect_g (S^2 \cross S^1, T)$.  Notice
that $M_g$ may also be obtained by forming doubling a genus $g$
handlebody across its boundary.

Waldhausen appears to claim the following:

\begin{theorem}[Waldhausen 4.1]
\label{Thm:Double}
$F_g$ is the unique unstabilized splitting of $M_g$, up to isotopy.
\end{theorem}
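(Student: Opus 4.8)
The plan is to reduce the statement to Waldhausen's Theorem (\refthm{Waldhausen}) by exploiting the fact that $M_g$ is reducible. I first dispose of the easy half, that $F_g$ is itself unstabilized. As $M_g = \connect_g(S^2 \cross S^1)$ has $\pi_1(M_g)$ free of rank $g$, any genus $h$ splitting presents $\pi_1(M_g)$ on $h$ generators, so $h \geq g$; thus $g$ is the Heegaard genus of $M_g$. In particular $F_g$, being of minimal genus, is unstabilized --- a stabilized splitting destabilizes to one of strictly smaller genus --- and it remains only to show that every unstabilized splitting $(M_g, G)$ is equivalent to $F_g$.

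For the uniqueness half I would use the reducibility of $M_g$: each nonseparating two-sphere of $M_g$ can be arranged (Haken's lemma) to meet a given splitting surface in a single essential curve, and cutting along such reducing spheres expresses $(M_g, G)$ as a connect sum of splittings of the prime summands of $M_g$. Since prime decomposition is unique, these summands are exactly $g$ copies of $S^2 \cross S^1$ together with, possibly, several copies of $S^3$. By \refthm{Waldhausen} and the uniqueness of stabilization, every positive genus splitting of $S^3$ is stabilized; since a stabilized summand would stabilize $G$, the assumption that $G$ is unstabilized forces every $S^3$--summand to be trivial and each $S^2 \cross S^1$--summand to be unstabilized.

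It then remains to settle the base case: the standard genus one splitting $T$ is the unique unstabilized splitting of $S^2 \cross S^1$. Here reducibility again applies. Given a splitting of $S^2 \cross S^1$ of genus $h \geq 2$, reducing along its reducing spheres expresses it as $(S^2 \cross S^1, T) \connect (S^3, F'')$ with $\genus(F'') = h-1 \geq 1$. By \refthm{Waldhausen} the splitting $F''$ is the $(h-1)$--fold stabilization of $(S^3, S^2)$, so the genus $h$ splitting is a stabilization of $T$ and hence is stabilized. Thus $T$ is the only unstabilized splitting of $S^2 \cross S^1$, and feeding this back into the decomposition of the previous paragraph gives $(M_g, G) \equiv \connect_g(S^2 \cross S^1, T) = (M_g, F_g)$, as claimed.

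The main obstacle is the reduction step: producing the reducing spheres and verifying that they split the Heegaard surface compatibly into a genuine connect sum of summand splittings (and that this decomposition is well defined). This is precisely the point at which a direct, in-house imitation of the proof of \refthm{Waldhausen} breaks down. Such an imitation would take an unstabilized $(M_g, G)$, form a minimal-genus common stabilization $F$ with $F_g$ carrying good systems $v$ (reducing $F$ to $F_g$) and $x$ (reducing $F$ to $G$), and attempt to cut off the last trivial handle $(v_n, w_n)$. But the auxiliary step arranging $|y \cap v_n| \leq 1$ (Waldhausen~3.2) runs its innermost--arc and bigon surgeries inside the complement $C = W \setminus U(w)$ under the hypothesis that $C$ is a three-ball, so that $y$ cuts $C$ into balls. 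For $M_g$ this complement is instead a genus $g$ handlebody, the innermost and bigon arguments no longer terminate on their own, and organizing the resulting essential spheres --- equivalently, carrying out the connect-sum reduction above --- is where the real work lies.
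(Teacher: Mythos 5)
Your proposal follows precisely the route that the paper itself flags as insufficient, and the ``main obstacle'' you concede in your final paragraph is not a technical loose end but the entire content of the theorem. The paper states \refthm{Double} \emph{without proof}: it observes that Waldhausen's own remark suggests deducing it from Haken's Lemma \cite{Haken68} together with \refthm{Waldhausen}, notes that this argument yields uniqueness of $F_g$ only up to \emph{homeomorphism} of pairs, and says explicitly that it is not clear how to extract uniqueness up to \emph{isotopy} from that remark. According to the paper, the first actual proofs in the literature are Carvalho--Oertel \cite[Theorem~1.10]{CarvalhoOertel05}, an argument via Hatcher's normal form for sphere systems \cite[Proposition~1.1]{Hatcher95}, and a deduction from Laudenbach \cite{Laudenbach73}. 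Your easy half is fine: $\rank \pi_1(M_g) = g$ forces Heegaard genus $g$, so $F_g$ is unstabilized. The gap is in the uniqueness half. Cutting $(M_g, G)$ along reducing spheres, applying \refthm{Waldhausen} to the summands, and reassembling identifies $G$ with $F_g$ only after you (i) choose a sphere system meeting $G$ standardly, (ii) cut, cap off, and reassemble, and (iii) compare the two reassembled pictures --- and each step is a priori well defined only up to homeomorphism. In the reducible manifold $M_g$ the mapping class group is large (twists along essential spheres, permutations and spins of summands), a homeomorphism carrying one splitting surface to another need not be isotopic to the identity, and different reducing sphere systems for $G$ need not be ambiently isotopic. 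Controlling isotopy classes of sphere systems is exactly what Hatcher's normal form or Laudenbach's results supply; without such input your argument proves only the homeomorphism statement, which the paper already concedes follows from Haken's Lemma.

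The same defect infects your base case: asserting that a genus $h \geq 2$ splitting of $S^2 \cross S^1$ \emph{is}, up to isotopy rather than merely up to homeomorphism of pairs, the connect sum $(S^2 \cross S^1, T) \connect (S^3, F'')$ already presupposes the well-definedness of the decomposition that you defer to the end. Your closing diagnosis of why an in-house imitation of the proof of \refthm{Waldhausen} fails --- the complement $C = W \setminus U(w)$ is a genus $g$ handlebody rather than a ball, so the bigon and innermost-arc surgeries of Waldhausen~3.2 do not terminate --- is accurate and worth keeping, but it confirms rather than repairs the gap. In short: the proposal is an outline of the standard reduction with the crucial isotopy-versus-homeomorphism step missing, whereas the paper deliberately leaves \refthm{Double} unproved and points to \cite{CarvalhoOertel05}, \cite{Hatcher95} and \cite{Laudenbach73} for that step.
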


\noindent
His actual sentence is: 
\begin{quote}
Hieraus und aus [\refthm{Waldhausen}] folgt, da\ss{} auch die
Mannigfaltigkeiten [$M_g$] nur die bekannten Heegaard-Zerlegungen
besitzen.
\end{quote}
(Brackets added.)  This indicates that \refthm{Double} follows from
Haken's Lemma~\cite{Haken68} and \refthm{Waldhausen}.  It is clear
that Haken's Lemma can be used to prove that $F_g$ is unique up to
homeomorphism.  It is not clear to this writer how to obtain
\refthm{Double} by following Waldhausen's remark.

It seems that no proof of \refthm{Double} appears in the literature
until the recent work of Carvalho and Oertel on automorphisms of
handlebodies.  See Theorem~1.10 of their
paper~\cite{CarvalhoOertel05}.  A similar proof may be given using
Hatcher's normal form for sphere systems (Proposition~1.1
of~\cite{Hatcher95}).  Carvalho and Oertel also give an alternative
proof, deducing \refthm{Double} from work of
Laudenbach~\cite{Laudenbach73}.

\subsection*{Compression bodies}

\begin{definition}[Waldhausen 4.2]
Suppose that $V$ is a handlebody and $D$ is a (perhaps empty) system
of meridianal disks properly embedded in $V$.  Let $N$ be a closed
regular neighborhood of $D \cup \bdy V$, taken in $V$.  Then $N$ is a
{\em compression body}.
\end{definition}

Note that $\bdy N$ is disconnected and contains $\bdy V$ as a
component.  This component is called the {\em positive} boundary of
$N$ and is denoted by $\bdy_+ N$.  The {\em negative} boundary is
$\bdy_- N = \bdy N \setminus \bdy_+ N$.  Most modern authors disallow
copies of $S^2$ appearing in $\bdy_- N$.

Now suppose $M$ is an orientable three-manifold and $F \subset M$ is a
orientable closed surface in the interior of $M$.  If $F$ cuts $M$
into two pieces $V$ and $W$, where each of $V$ and $W$ is a handlebody
or a compression body, and where $F = \bdy_+ V = \bdy_+ W$ then we say
that $(M, F)$ is a Heegaard splitting of $M$ with respect to the {\em
partition} $(V \cap \bdy M, W \cap \bdy M)$.  Equivalence (up to
isotopy), stabilization, and stable equivalence with respect to a
fixed partition may all be defined as above.  The Reidemeister--Singer
Theorem can then be extended: any two Heegaard splittings of $M$
giving the same partition of $\bdy M$ are stably equivalent.


\subsection*{Haken's Lemma in compression bodies}

Haken's Lemma also applies to Heegaard splittings respecting a
partition.  Similarly, suppose that $(M, F)$ is a Heegaard splitting
respecting a partition and $D \subset M$ is a properly embedded disk
so that $\bdy D$ is essential in $\bdy M$.  Then there is another such
disk meeting $F$ is a single curve.  Using this and
\refthm{Waldhausen} we have:

\begin{theorem}[Waldhausen 4.3]
If $V$ is a handlebody and $(V, F)$ is an unstabilized splitting then
$F$ is parallel to $\bdy V$.  \qed
\end{theorem}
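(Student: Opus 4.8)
The plan is to cap the splitting off inside $S^3$, bring in Waldhausen's Theorem (\refthm{Waldhausen}), and induct on the genus of $V$ using Haken's Lemma. Write $V = A \cup_F C$ with $A$ a handlebody and $C$ a compression body, so that $\bdy_+ C = \bdy A = F$ and $\bdy_- C = \bdy V$; put $g = \genus(\bdy V)$. Since $\genus(\bdy_+ C) \geq \genus(\bdy_- C)$ for every compression body, $\genus(F) \geq g$, with equality exactly when $C$ has no one-handles, i.e.\ when $C \cong F \cross I$ is a product and $F$ is parallel to $\bdy V$. Thus it suffices to prove $\genus(F) = g$ whenever $(V, F)$ is unstabilized, and I will argue this by induction on $g$.

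To invoke \refthm{Waldhausen}, realize $V$ as a standardly embedded handlebody in $S^3$ with complementary handlebody $W$ of genus $g$, so $\bdy V = \bdy W$. Gluing $W$ to $\bdy_- C$ caps $C$ off to a handlebody $W^+ = C \cup_{\bdy V} W$ with boundary $F$, so that $(S^3, F) = A \cup_F W^+$ is a Heegaard splitting of $S^3$ of genus $\genus(F)$. If $\genus(F) \geq 1$ this is a positive-genus splitting of the three-sphere, hence by \refthm{Waldhausen} it is stabilized and carries a meridianal pair $\{a, b\}$ with $a \subset A$, $b \subset W^+$, whose boundaries meet once on $F$. When $g = 0$ the manifold $V$ is a ball and $W$ is a ball glued to the sphere $\bdy_- C$; an innermost-disk argument isotopes $b$ off this ball, making $b$ a meridian disk of $C$. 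Then $\{a, b\}$ is a meridianal pair for $(V, F)$ itself, so $(V, F)$ is stabilized, contradicting our hypothesis and establishing the base case $\genus(F) = 0$.

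For the inductive step suppose $g \geq 1$ and, toward a contradiction, that $\genus(F) > g$. Apply the compression-body form of Haken's Lemma recorded above to a meridian disk of $V$: it returns a meridian disk $D'$ of $V$, with $\bdy D'$ essential in $\bdy V$, meeting $F$ in a single curve $c$. Because $\bdy D' \subset \bdy V$ is disjoint from $F$, the curve $c$ is a circle splitting $D'$ into an inner disk $D'_A \subset A$ bounded by $c$ and a spanning annulus $D'_C \subset C$ joining $c$ to $\bdy D'$; minimality of $|D' \cap F|$ and incompressibility of $\bdy_- C$ in $C$ force $c$ to be essential in $F$. Cutting $V$ along $D'$ yields a genus $(g-1)$ handlebody $V_1$ with an induced splitting $(V_1, F_1)$, where $F_1$ is $F$ compressed along $D'_A$; the spanning annulus ensures $\genus(F_1) = \genus(F) - 1$ while $\genus(\bdy V_1) = g - 1$, so the excess is preserved and positive. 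By the inductive hypothesis $(V_1, F_1)$ is therefore stabilized, and reversing the cut, which reattaches a one-handle of $V$ and tubes $F_1$ back to $F$, carries a destabilizing meridianal pair of $(V_1, F_1)$ to one for $(V, F)$, the desired contradiction.

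The essential difficulty is the interface between the $S^3$-picture and the intrinsic geometry of $V$: the meridianal pair supplied by \refthm{Waldhausen} lives in $W^+$, and its disk $b$ may genuinely be forced through the auxiliary handlebody $W$ (whose boundary $\bdy V$ is compressible in $W$), so one cannot simply read a destabilization of $(V, F)$ off the $S^3$ splitting. Haken's Lemma is exactly the tool that circumvents this, replacing an uncontrolled disk by one meeting $F$ a single time. The remaining delicate verifications, namely that $D'$ and $c$ can be arranged to be non-separating so that $V_1$ and $F_1$ stay connected, and that cutting along $D'$ and its reverse genuinely preserve the property of being (un)stabilized, are where the bookkeeping must be done with care.
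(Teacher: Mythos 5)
Your overall route is exactly the one the paper intends: the paper derives this theorem in a single line from the partition version of Haken's Lemma together with \refthm{Waldhausen} and supplies no further detail, so your induction on $g = \genus(\bdy V)$ is a fleshing-out of the same sketch rather than a different proof. Your base case is sound: capping the compression body $C$ with a ball yields a handlebody $W^+$, any positive-genus splitting of $S^3$ is stabilized by \refthm{Waldhausen}, and irreducibility of $W^+$ lets you pull the disk $b$ off the sphere $\bdy V$ by innermost-disk moves supported away from $\bdy b$, so the pair $\{a,b\}$ descends to a destabilizing pair for $(V,F)$.

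However, two of the steps you defer are genuinely load-bearing, and one of them fails as you state it. You cannot simply ``arrange'' that $D'$ and $c$ be non-separating: the compression-body form of Haken's Lemma, as recorded in the paper, returns \emph{some} disk with essential boundary and gives no control over the isotopy class of $\bdy D'$ (in particular it does not assert $\bdy D' = \bdy D$), so the separating case must be confronted rather than legislated away. It can be: the spanning annulus $D'_C$ makes $c$ and $\bdy D'$ homologous in $C$, and since $H_1(\bdy_- C) \to H_1(C)$ is injective for a compression body, $c$ separates $F$ if and only if $\bdy D'$ separates $\bdy V$; in the separating case, because an essential separating curve in $\bdy V$ splits it into pieces of genus at least one, cutting produces two handlebody splittings whose boundary genera are each strictly less than $g$ and whose excesses $\genus(F_i) - \genus(\bdy V_i)$ sum to the original excess, so at least one piece has positive excess and the induction on $g$ still closes. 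Second, your reversal step (a destabilizing pair for $(V_1, F_1)$ yields one for $(V, F)$) needs an actual argument, not just the phrase ``reattaches a one-handle'': observe that the two disks of the pair are properly embedded with boundary on $F_1$, so their interiors are automatically disjoint from $\bdy V_1$ and hence from the scars of $D'$; after isotoping them off the two disk caps of $F_1$ (possible since the caps are disks, with interior intersections removed using irreducibility of the two sides), the pair survives the regluing and misses the tube that recovers $F$ from $F_1$, so it destabilizes $(V, F)$. With these two repairs your argument is complete and is, in substance, the proof the paper leaves to the reader.
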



\subsection*{Lens spaces}

As noted above, in addition to equivalence up to isotopy, we may
define another equivalence relation on splittings $(M, F)$; namely
equivalence up to orientation-preserving homeomorphism of pairs.  If
$\bdy M \neq \emptyset$ then we also require that the partition of
$\bdy M$ be respected.  Notice that these two equivalence relations do
not generally agree, for example in the presence of {\em
incompressible} tori.  For a modern discussion, with references,
see Bachman and Derby-Talbot~\cite{BachmanDerby06}.


Waldhausen notes that connect sum makes either set of equivalence
classes into a commutative and associative monoid.  This monoid is
{\em not} cancellative.  Suppose that $(M,F)$ is a genus one splitting
of a lens space, not equal to the three-sphere.  Then $(M, F)$ is
characterized, up to homeomorphism, by a pair of relatively prime
integers $(p, q)$ with $0 < q < p$.  Now, letting $-F$ represent $F$
with the opposite orientation, we find that $(M, -F)$ is characterized
by $(p, q')$ where
$$
q \cdot q' = 1 \left(\mbox{mod $p$}\right).
$$ 
It follows that $(M, F)$ and $(M, -F)$ are generally not equivalent.
On the other hand, $(M, F) \connect (S^3, T)$ and $(M, -F) \connect
(S^3, T)$ are always equivalent.  For suppose that $D \subset F$ is a
small disk, $N$ is a closed regular neighborhood of $F \setminus
\interior(D)$, and $G = \bdy N$.  Then $G$ is the desired common
stabilization.

Waldhausen ends by suggesting that the pairs $(M,F)$ characterized by
$(5,2)$ and $(7,2)$, and their orientation reverses (namely $(5,3)$
and $(7,4)$), have interesting connect sums.  He wonders how many
distinct equivalence classes, up to isotopy or up to homeomorphism,
are represented by the four sums
$$
(5,2) \connect (7,2),\ (5,2) \connect (7,4),\ (5,3) \connect (7,2),\
(5,3) \connect (7,4).
$$ 
This question was answered by Engmann~\cite{Engmann70}; no pair of the
suggested genus two splittings are homeomorphic.

\bibliographystyle{gtart}
\bibliography{link}
\end{document}